\documentclass[preprint, review, 10pt]{elsarticle}






\usepackage{amsmath, amssymb, amsfonts, amsthm}
\usepackage{hyperref}
\usepackage{graphicx}
\usepackage{mathrsfs}

\usepackage{color}
\usepackage{cases}

 \usepackage{diagbox}

\newtheorem{lemma}{Lemma}
\newtheorem{remark}{Remark}
\newtheorem{theorem}{Theorem}

\let\mr=\mathrm


\begin{document}

\begin{frontmatter}



\title{
Supercloseness of finite element method on  a Bakhvalov-type  mesh for a singularly perturbed problem with two parameters
\tnoteref{funding} }

\tnotetext[funding]{
This research is supported by National Natural Science Foundation of China (11771257, 11601251).
}

\author[label1] {Jin Zhang\corref{cor1}}
\author[label1] {Yanhui Lv \fnref{cor2}}
\cortext[cor1] {Corresponding author: jinzhangalex@hotmail.com }
\fntext[cor2] {Email: yanhuilv@hotmail.com }
\address[label1]{School of Mathematics and Statistics,  Shandong Normal University, 
Jinan 250014,  China}

\begin{abstract}
In this paper, the linear finite element method  on a Bakhvalov-type  mesh is applied to a  singularly perturbed problem with two parameters. The solution of the problem exists two exponential boundary layers.  A new interpolation, which is simple in construction and analysis, is introduced for convergence analysis. Furthermore, we find a subtle relationship between the Bakhvalov-type mesh itself and the weaker exponential layer and obtain an interesting result. Finally, we prove  a supercloseness result between the Lagrange interpolation and the numerical solution. Numerical tests confirm our theoretical results.

\end{abstract}

\begin{keyword}
Singular perturbation\sep Convection--diffusion equation \sep Two parameters \sep Finite element method \sep Bakhvalov-type mesh\sep Supercloseness
\end{keyword}

\end{frontmatter}



%
%
%

\section{Introduction}
We consider the two-parameter singularly perturbed boundary value problem
\begin{equation}\label{eq:I-condition-1}
-\varepsilon_1 u''+\varepsilon_2b(x)u'+c(x)u=f(x)\quad \text{in $\Omega:=(0, 1)$}, \quad
u(0)=u(1)=0, 
\end{equation}
with
\begin{align}
b(x)\ge \lambda>0, \quad c(x)\ge \beta>0\quad \text{on $\bar{\Omega}$}, \label{eq:I-condition-2}\\
c(x)-\frac{1}{2}\varepsilon_2b'(x)\ge \gamma>0\quad \text{on $\bar{\Omega}$}\label{eq:I-condition-3}, 
\end{align}
where $b$,  $c$,  $f$ are sufficiently smooth functions on $\bar{\Omega}=[0, 1]$ and $\lambda$,  $\beta$,  $\gamma$ are constants.  Here we are interested in the case $0<\varepsilon_1, \varepsilon_2\ll 1$ (see \cite{OMalley:1967-Two-parameter}). Thus the problem \eqref{eq:I-condition-1}
is singularly perturbed. Moreover,  the conditions \eqref{eq:I-condition-2} and \eqref{eq:I-condition-3} ensure that  there exists a unique solution $u\in H_0^1(\Omega)$,   which is characterized by boundary layers at $x = 0$ and $x = 1$.

It is found that the values of $\varepsilon_1$ and $\varepsilon_2$ strongly affect the properties of boundary layers. When $\varepsilon_1\ll \varepsilon_2^2\ll 1$,  the boundary layer in
the vicinity of $x=1$ is stronger than the boundary layer at $x=0$, as is similar to the  reaction-convection-diffusion type problem. When $\varepsilon_2^2\ll \varepsilon_1\ll 1$, the widths of the layers around $x=0$ and $x=1$ are almost the same, as holds for the reaction-diffusion case. And when 
 $\varepsilon_2=1$, the problem becomes a single parameter problem
 and exhibits boundary layer only at $x = 0$.


For singular perturbation problems, it is well known that standard numerical methods do not work. Therefore, layer-adapted  meshes represented by Shishkin-type meshes and Bakhvalov-type meshes (see \cite{Linb:2010-Layer}) are explored to solve these problems. However, 
different from  Shishkin-type meshes, Bakhvalov-type meshes brings great difficulties in convergence analysis (see \cite[\S 2]{Roos-2006-Error}), so there are few articles on this kind of meshes at present. But, the existence of Bakhvalov-type meshes is of great significance, because 
 numerical results of Bakhvalov-type meshes are often significantly better than those of Shishkin-type meshes.


In recent years, the study of supercloseness has attracted more and more attention, since the supercloseness results play an important role in the posterior error analysis  as well as for
improved approximations of the solution(see \cite{Li1Wheeler2:2000-Uniform,Li:2001-Convergence,Roos1Lins2:2001-Dradient,Styn1Tobi2:2003-SDFEM,Fran1Lins2:2008-Superconvergence}). However,  only the supercloseness study  on Shishkin-type meshes exists  at present for two-parameter singular perturbation problems  (see \cite{Zhang:2003-Finite,Teof1Zari2:2009-Superconvergence,Teof1Brdar2Fran3Zari4:2018-SDFEM}), while the research on Bakhvalov-type meshes seems to be in a blank state due  to the lack of powerful technical tools.



In this paper, we obtain the supercloseness results of the linear finite element method on a Bakhvalov-type mesh for the two-parameter singularly perturbed problem. First, we use a new interpolation, which has a simple structure and is easy for convergence analysis on Bakhvalov-type meshes. Second, we find a subtle relationship between the Bakhvalov-type mesh itself and the weaker exponential layer. Using this relationship, we get an interesting  result. In addition, 
we would like to emphasize that in the analysis we found that the difficulties of supercloseness were mainly caused by the weak exponential layer and the regular part, rather than being controlled by the strong exponential layer.
This may shed light to the subsequent analysis on higher-dimensional problems.

Our paper is organized as follows. In Section 2 we present properties of the exact solution as well as its derivatives,  then introduce a Bakhvalov-type  mesh and its properties,  and define the finite element method  of our problem. 
In Section 3 we define an interpolation $\Pi u$ and prove interpolation error bounds for the Lagrange interpolation. Section 4 contains error bounds of $u^I-u^N$ and the main result on the uniform error bound in the energy norm. 

In this paper,  the value of $C$ is independent of the perturbation parameters $\varepsilon_1$,  $\varepsilon_2$ and
the number $N$. For a set $D\subset\mathbb{R}$,  we use the standard notation for Banach spaces $L^p(D)$,  Sobolev spaces
$W^{k,  p}(D)$,  $H^k(D)=W^{k,  2}(D)$. For $p=2$,  we have  the norm $\Vert\cdot\Vert_D$ for $\Vert\cdot\Vert_{L^2(D)}$, 
and the seminorm $\vert\cdot\vert_{1, D}$ for $\Vert\cdot\Vert_{H^1(D)}$. When $D=\Omega$ we drop the subscript $D$ from the  notation for simplicity.
 
\section{Regularity,  Bakhvalov-type  mesh and finite element method}
\subsection{Characteristic of Solution }\label{sec:cof}
According to \cite[\S 2]{Brda1Zari2:2016-singularly},  we describe the characteristics of the solution to problem \eqref{eq:I-condition-1},  which are usually needed for error estimations in the case of singularly perturbed problems.

In order to describe the layers in the solution of \eqref{eq:I-condition-1},  the characteristic equation is introduced
$$-\varepsilon_1g^2(x)+\varepsilon_2b(x)g(x)+c(x)=0.$$
It has two real solutions $g_0(x)<0$ and $g_1(x)>0$ which characterize the layers. Let
$$\mu_0=-\max\limits_{0\leq x\leq1} g_0(x),  \quad \mu_1=\min\limits_{0\leq x\leq1} g_1(x), $$
where $$\mu_0=-g_0(x_*)=\frac{-\varepsilon_2b_*+\sqrt{\varepsilon_2^2b_*^2+4\varepsilon_1c_*}}{2\varepsilon_1}, $$
with $b_*=b(x_*)$ and $c_*=c(x_*)$ for some $x_*\in [0, 1]$.
The later analysis needs the following  properties of $\mu_0$  and $\mu_1$ (see \cite{Tefa1Roos2:2007-elliptic})
\begin{align}
&\mu_0\leq\mu_1, \qquad \qquad \max\{\mu_0^{-1},  \varepsilon_1\mu_1\}\leq C(\varepsilon_2+\varepsilon_1^{\frac{1}{2}}), \label{cos:conclusions-1}\\
&\varepsilon_2\mu_0\leq \lambda^{-1}\Vert c\Vert_{L^\infty(\Omega)}, \qquad\qquad\varepsilon_2(\varepsilon_1\mu_1)^{-\frac{1}{2}}\label{cos:conclusions-2}
\leq C\varepsilon_2^{\frac{1}{2}}.
\end{align}
The values $\mu_0$ and $\mu_1$ determine the decay of the boundary layers.   The following lemma from \cite{Linb:2010-Layer} provides more details on the behavior of the solution of \eqref{eq:I-condition-1} and its derivatives.
\begin{lemma}\label{lem1:cof}
Let $b$,  $c$,  $f\in C^q(\bar{\Omega})$ for some $q\geq 1$ and let $p,  \kappa\in(0, 1)$ be arbitrary. Assume that
$$q\Vert b'\Vert_{L^\infty(\Omega)}\varepsilon_2\leq\kappa(1-p).$$
Then
$$\vert u^{(k)}(x)\vert\leq C(1+\mu_0^ke^{-p\mu_0x}+\mu_1^ke^{-p\mu_1(1-x)})\qquad x\in\Omega, $$
for $0\leq k\leq q$.
\end{lemma}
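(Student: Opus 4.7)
The plan is to derive the bound via a classical Shishkin-style decomposition $u = S + E_0 + E_1$, in which $S$ is a smooth regular component, $E_0$ is a boundary layer at $x=0$ with decay rate $\mu_0$, and $E_1$ is a boundary layer at $x=1$ with decay rate $\mu_1$; the three pointwise bounds then compose into the statement of the lemma.

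First I would construct $S$ as a truncated asymptotic expansion in powers of $\varepsilon_1$. Take $S_0$ as the solution of the reduced first-order problem $\varepsilon_2 b S_0' + c S_0 = f$ with $S_0(1)=0$ (so the residue is pushed to the stronger layer at $x=0$), and recursively define $S_j$ for $j\geq 1$ via $\varepsilon_2 b S_j' + c S_j = S_{j-1}''$ with $S_j(1)=0$. Setting $S = \sum_{j=0}^{J}\varepsilon_1^j S_j$ for $J$ sufficiently large (depending on $q$), the hypothesis $c\geq \beta>0$ together with $\varepsilon_2 b \geq 0$ and standard ODE regularity yield $\|S^{(k)}\|_{L^\infty(\Omega)}\leq C$ uniformly in $\varepsilon_1,\varepsilon_2$ for $0\leq k\leq q$.

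Next, define the layer terms by $E_0(0)=-S(0)$, $E_0(1)=0$, $E_1(0)=0$, $E_1(1)=-S(1)$, with $L E_0 = 0$ and $L E_1$ equal to the (small and smooth) residual produced by the truncation, so that $L(S+E_0+E_1)=f$ and the zero Dirichlet data of $u$ are reproduced. The pointwise estimate on $E_0$ then follows from a barrier argument. Taking $\Psi(x)=C e^{-p\mu_0 x}$ and invoking the identity $\varepsilon_1\mu_0^2+\varepsilon_2 b_\ast\mu_0 = c_\ast$ arising from the characteristic equation, a direct computation gives
\[
L\Psi = \bigl(-\varepsilon_1 p^2 \mu_0^2 - \varepsilon_2 b\, p\mu_0 + c\bigr)\Psi \geq 0
\]
on $\bar\Omega$, where the factors $(1-p^2)$ and $(1-p)$ provide the positive slack; the maximum principle then yields $|E_0(x)|\leq \Psi(x)$. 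The bound on $E_1$ is symmetric with $\mu_1$ in place of $\mu_0$.

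Finally, derivative estimates are obtained by a bootstrap. Differentiating $LE_0=0$ successively produces equations for $E_0^{(k)}$ of the same convection--reaction--diffusion type whose inhomogeneous terms involve $b^{(j)},c^{(j)}$ and lower-order derivatives of $E_0$ already controlled by induction; a barrier of the form $C\mu_0^k e^{-p\mu_0 x}$ then delivers $|E_0^{(k)}(x)|\leq C\mu_0^k e^{-p\mu_0 x}$, and analogously for $E_1$. The main obstacle is precisely this inductive step: each differentiation pays an $\varepsilon_2 b'$ perturbation, and after $k\leq q$ iterations the accumulated perturbation must remain smaller than the reserve $c-\tfrac12\varepsilon_2 b'\geq\gamma$ so that the barrier inequality persists with the same exponent $p$. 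The hypothesis $q\|b'\|_{L^\infty(\Omega)}\varepsilon_2\leq \kappa(1-p)$ is exactly the budget that makes this bookkeeping close, with the factor $(1-p)$ measuring the slack left in the characteristic equation and $\kappa$ the margin kept against the lower bound on $c$.
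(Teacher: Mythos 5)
You should first know that the paper itself contains no proof of this lemma: it is quoted from Lin\ss's monograph \cite{Linb:2010-Layer}, and the decomposition \eqref{eq:u-decomposition}--\eqref{lem1:cof-1} is cited separately from \cite{Linss:2001-necessity}. So your sketch has to stand on its own, and it has a genuine gap in the construction of the regular part. For the two-parameter problem the reduced operator as $\varepsilon_1,\varepsilon_2\to 0$ is multiplication by $c$, not the convection--reaction operator $\varepsilon_2 b\,\tfrac{d}{dx}+c$. Your first-order problem $\varepsilon_2 bS_0'+cS_0=f$ is itself singularly perturbed in $\varepsilon_2$: since $b\ge\lambda>0$ its stable direction is left-to-right, so imposing $S_0(1)=0$ integrates against the flow and yields a solution of size $e^{C/\varepsilon_2}$ (look at the integrating factor $\exp\bigl(\int c/(\varepsilon_2 b)\bigr)$); and even with the admissible condition $S_0(0)=0$ one only gets $|S_0^{(k)}|\le C\bigl(1+\varepsilon_2^{-k}e^{-\beta x/(\varepsilon_2\Vert b\Vert_{L^\infty})}\bigr)$, not $\Vert S_0^{(k)}\Vert_{L^\infty}\le C$. (Incidentally, the layer at $x=0$ is the weaker one here, $\mu_0\le\mu_1$, not the stronger one.) The correct expansion starts from $cS_0=f$ and proceeds in powers of both parameters.

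The second gap is in the inductive derivative bounds for the layer parts. The zeroth-order barrier computation is fine: positivity of $-\varepsilon_1p^2\mu_0^2-\varepsilon_2 b\,p\mu_0+c$ follows from the fact that $-\mu_0$ lies between the roots $g_0(x)$ and $g_1(x)$ of the characteristic polynomial, together with $c\ge\beta$. But ``differentiate $LE_0=0$ and apply a barrier of the form $C\mu_0^ke^{-p\mu_0x}$'' does not close: after differentiating, $v=E_0^{(k)}$ solves a problem of the same type, and a comparison-function argument requires the boundary values $v(0)$ and $v(1)$, i.e.\ $E_0^{(k)}(0)$ and $E_0^{(k)}(1)$ --- which are precisely what you are trying to estimate. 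Controlling $|u'(0)|$ and $|u'(1)|$ (and then the higher endpoint derivatives) by separate local or integral-representation arguments is the heart of Kellogg--Tsan-type proofs, and it is there, not merely in the drift of the reaction coefficient by multiples of $\varepsilon_2 b'$, that the hypothesis $q\Vert b'\Vert_{L^\infty(\Omega)}\varepsilon_2\le\kappa(1-p)$ is spent. Your reading of that hypothesis is right in spirit, but the step that makes the induction start and close is missing.
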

Then, from \cite{Linss:2001-necessity} the solution $u$ of \eqref{eq:I-condition-1}  has the representation
\begin{equation}\label{eq:u-decomposition}
u=S+E_0+E_1, 
\end{equation}
with
\begin{equation}\label{lem1:cof-1}
\vert S^{(k)}\vert \leq C,  \qquad\vert E_0^{(k)}\vert\leq C\mu_0^ke^{-p\mu_0x},  \qquad\vert E_1^{(k)}\vert\leq C\mu_1^ke^{-p\mu_1(1-x)}, 
\end{equation}
for $x\in\Omega$ and $0\leq k\leq q$.

\subsection{Bakhvalov-type  mesh and  finite element method }
Let $N\in \mathbb{N},  N\ge 16$,  be divisible by 4, and
\begin{equation}\label{eq:break point}
\sigma_j=\frac{\tau}{p\mu_j}\ln \mu_j\leq\frac{1}{4}\quad j=0,  1, 
\end{equation}
where $\tau\geq1$ is a user-chosen parameter and $p\in(0, 1)$ is the parameter from Lemma \ref{lem1:cof}. We set $\sigma_0$ and $1-\sigma_1$ as Bakhvalov-type  mesh's transition points.
Transition points are the points where the mesh changes from fine to coarse and viceversa. Set $\Omega_0=(0,  \sigma_0)$,   $\Omega_e=(\sigma_0,  1-\sigma_1)$ and $\Omega_1=(1-\sigma_1,  1)$.
Then define a mesh on $\bar\Omega$ such that it is equidistant on $\bar\Omega_e$ with $N/2$ mesh subintervals,  and gradually divided on $\bar\Omega_0$ and $\bar\Omega_1$ with $N/4$ mesh subintervals.
Similar to \cite{Brda1Zari2:2016-singularly}, the mesh points $x_i$,  $i=0,  1,  \cdots,  N$,  are defined by
\begin{equation}\label{eq:mesh-points}
x_i=\begin{cases}
 \frac{\tau}{p\mu_0}\varphi_0(t_i)\qquad\qquad i=0,  1,  \cdots,  \frac{N}{4}, \\
 \sigma_0+2(t_i-\frac{1}{4})(1-\sigma_0-\sigma_1)\qquad i=\frac{N}{4}, \frac{N}{4}+1, \cdots, \frac{3N}{4}, \\
 1-\frac{\tau}{p\mu_1}\varphi_1(t_i)\qquad\qquad i=\frac{3N}{4},  \frac{3N}{4}+1,  \cdots,  N, 
\end{cases}
\end{equation}
where $t_i=\frac{i}{N}$,  $i=0,  1,  \cdots,  N$ and
$$
\varphi_0(t)=-\ln(1-4(1-\mu_0^{-1})t), \quad
\varphi_1(t)=-\ln(1-4(1-\mu_1^{-1})(1-t)).
$$
The mesh generating functions $\varphi_0$ and $\varphi_1$ are piecewise continuously differentiable and satisfy the following conditions
\begin{align*}
\varphi_0(0)=0, \quad \varphi_0\left(\frac{1}{4}\right)=\ln\mu_0, \\
\varphi_1\left(\frac{3}{4}\right)=\ln\mu_1, \quad \varphi_1(1)=0.
\end{align*}

The weak form of problem \eqref{eq:I-condition-1} is to find $u\in H^1_0(\Omega)$ such that
\begin{equation}\label{eq:weak form-1d}
a(u,  v)=(f,  v) \quad \forall v\in H^1_0(\Omega), 
\end{equation}
where
\begin{equation*}\label{eq:bilinear form}
a(u, v):=\varepsilon_1 ( u',  v')+\varepsilon_2(b u',  v)+(cu, v)  \quad  u,  v\in H^1_0(\Omega), 
\end{equation*}
and $(\cdot, \cdot)$ denotes the standard scalar product in $L^2(\Omega)$.

Set $I_i:=[x_i, x_{i+1}]$ with $i=0,  1,  \cdots,  N-1$. Define the linear finite element space on the Bakhvalov-type  mesh \eqref{eq:mesh-points}
\begin{equation*}\label{eq:VN}
V^{N}=\{w\in C(\bar{\Omega}):\; w(0)=w(1)=0, \;
 \text{$w|_{I_i}\in P_1(I_i)$ for $i=0, \ldots, N-1$ } \}, 
\end{equation*}
with the standard basis $\{\theta_i\}_{i=0}^N$ of hat functions,  i.e.,  $\theta_i(x_j)=\delta_{ij},  i,  j=0, 1, \cdots, N$.
The finite element method for \eqref{eq:weak form-1d} reads as: Find $u^N\in V^{N}$ such that
\begin{equation}\label{eq:FE-1d}
a(u^N, v^N)=(f, v^N) \quad \forall v^N\in V^N.
\end{equation}
The energy norm associated with $a(\cdot, \cdot)$ is defined by
\begin{equation*}\label{eq:energy norm}
\Vert v \Vert_{ E}^2:=
\varepsilon_1 \vert  v \vert^2_1+ \Vert v \Vert^2  \quad \forall v\in H^1(\Omega).
\end{equation*}
Using \eqref{eq:I-condition-3},  it is easy to see that one has the coercivity
\begin{equation*}\label{eq:coercivity}
a(v^N, v^N) \ge \alpha \Vert v^N \Vert_{E}^2\quad \text{for all $v^N\in V^N$}, 
\end{equation*}
with $\alpha=\min (1, \gamma)$. It follows that $u^N$ is well defined by \eqref{eq:FE-1d} (see \cite{Bren1Scot2:2008-mathematical} and references therein). Hence,  the problem \eqref{eq:FE-1d} has a unique solution.

%
%
%
\subsection{Some preliminary results}\label{sec:mesh}
In this subsection,  we present some preliminary results on the Bakhvalov-type  mesh \eqref{eq:mesh-points}.

Since $1\ll\mu_0\leq\mu_1$,  throughout the paper we assume
\begin{equation}\label{eq:N to solutions}
\mu_1^{-1}\leq\mu_0^{-1}\leq N^{-1}, 
\end{equation}
which is not a restriction in practice.
\begin{remark}\label{eq:remark1}
Without assumption \eqref{eq:N to solutions},  there will be $\mu_0^{-1}>N^{-1}$ or $\mu_1^{-1}>N^{-1}$. We might as well take $\mu_0^{-1}>N^{-1}$,  then the boundary layer at $x=0$ can be captured by standard triangulation and it is unnecessary for us to use finer meshes in the layer area.

\end{remark}

Similar to  Lemma 2  in \cite{Zhan1Liu2:2020-Optimal},  one has the following two lemmas.
\begin{lemma}\label{lem:mesh size}
The mesh sizes $h_i:=x_{i+1}-x_i,  i=0,  1,  \cdots, N-1$,  satisfy
\begin{align*}
&h_0\leq h_1\leq \cdots\leq h_{\frac{N}{4}-2},\\
&\frac{\tau}{4p}\mu_0^{-1}\leq h_{\frac{N}{4}-2}\leq \frac{\tau}{p}\mu_0^{-1}, \\
&\frac{\tau}{2p}\mu_0^{-1}\leq h_{\frac{N}{4}-1}\leq \frac{4\tau}{p}N^{-1}, \\
&N^{-1}\leq h_i\leq 2N^{-1} \qquad\frac{N}{4}\leq i\leq \frac{3N}{4}-1,\\
&\frac{\tau}{2p}\mu_1^{-1}\leq h_{\frac{3N}{4}}\leq \frac{4\tau}{p}N^{-1}, \\
&\frac{\tau}{4p}\mu_1^{-1}\leq h_{\frac{3N}{4}+1}\leq \frac{\tau}{p}\mu_1^{-1}, \\
&h_{\frac{3N}{4}+1}\geq h_{\frac{3N}{4}+2}\geq \cdots\geq h_{N-1}.
\end{align*}
\end{lemma}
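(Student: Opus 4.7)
My strategy is to split the lemma into its three natural regions and treat the left fine region in detail, then derive the right one by symmetry.

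\emph{Monotonicity in the layer regions.} On $[0,1/4)$ a direct computation gives $\varphi_0''(t)>0$, so $\varphi_0$ is strictly convex. Since $h_i=\frac{\tau}{p\mu_0}\bigl(\varphi_0(t_{i+1})-\varphi_0(t_i)\bigr)$ on equally spaced nodes $t_i=i/N$, convexity yields $h_0\le h_1\le\cdots\le h_{N/4-2}$ at once. The analogous convexity of $\varphi_1$ on $(3/4,1]$ delivers the reversed ordering $h_{3N/4+1}\ge h_{3N/4+2}\ge\cdots\ge h_{N-1}$.

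\emph{Bounds near the transition point.} For the last two subintervals of the left layer I would collapse the two relevant values of $\varphi_0$ into a single logarithm to obtain
$$h_{N/4-2}=\frac{\tau}{p\mu_0}\ln(1+r),\qquad h_{N/4-1}=\frac{\tau}{p\mu_0}\ln\!\bigl(1+4(\mu_0-1)/N\bigr),$$
where $r=\dfrac{4(1-\mu_0^{-1})/N}{\mu_0^{-1}+4(1-\mu_0^{-1})/N}\in(0,1)$. The upper bounds $\tau\mu_0^{-1}/p$ and $4\tau/(pN)$ drop out of $\ln(1+x)\le x$ and $\ln(1+r)\le\ln 2\le 1$. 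For the lower bounds I would exploit the standing hypothesis \eqref{eq:N to solutions} together with $N\ge 16$: rewritten as $\mu_0\ge N\ge 16$, these force $r\ge 1/2$ (so $\ln(1+r)\ge r/2\ge 1/4$) and $4(\mu_0-1)/N\ge 15/4$ (so $\ln(1+4(\mu_0-1)/N)\ge 1$), yielding the stated constants $\tau/(4p)$ and $\tau/(2p)$.

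\emph{Middle region and symmetric right region.} For $N/4\le i\le 3N/4-1$ the definition in \eqref{eq:mesh-points} gives $h_i=2(1-\sigma_0-\sigma_1)/N$ directly, and the bound $\sigma_0,\sigma_1\le 1/4$ built into \eqref{eq:break point} pins $1-\sigma_0-\sigma_1\in[1/2,1]$, hence $N^{-1}\le h_i\le 2N^{-1}$. The right-region inequalities on $h_{3N/4}$ and $h_{3N/4+1}$ are obtained verbatim by replacing $\mu_0$ with $\mu_1$ and $t$ with $1-t$ in the left analysis. The one genuinely delicate step is pinning the numerical constants near $i=N/4$; both halves of the standing assumption ($\mu_0^{-1}\le N^{-1}$ and $N\ge 16$) are essential there, since without them $r$ could collapse to $0$ or $4(\mu_0-1)/N$ could be made arbitrarily small, destroying the lower bounds.
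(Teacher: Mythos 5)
Your proof is correct, and it follows the standard direct computation: the paper itself gives no argument for this lemma, deferring to Lemma 2 of \cite{Zhan1Liu2:2020-Optimal}, and your route (convexity of $\varphi_0$, $\varphi_1$ for the monotonicity, telescoping the logarithms for the two subintervals adjacent to each transition point, and $\sigma_0,\sigma_1\le 1/4$ for the coarse part) is exactly the expected one. All the numerical constants check out — in particular $r\ge 1/2$ follows from $4(1-\mu_0^{-1})\ge N\mu_0^{-1}$, which \eqref{eq:N to solutions} and $N\ge 16$ guarantee, and $\ln(1+x)\ge x/2$ on $[0,1]$ closes the lower bound for $h_{N/4-2}$.
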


\begin{lemma}\label{lem:Error-condition}
On the Bakhvalov-type mesh \eqref{eq:mesh-points},  we obtain  for $0\leq m\leq\tau$
\begin{align}
&h_i^me^{-p\mu_0x_i}\leq C\mu_0^{-m}N^{-m} \qquad  i=0,1,\cdots,\frac{N}{4}-2, \label{lem:e-E0}\\
&h_i^me^{-p\mu_1(1-x_{i+1})}\leq C\mu_1^{-m}N^{-m} \qquad i=\frac{3N}{4}+1,\frac{N}{4}+2,\cdots, N-1.\label{lem:e-E1}
\end{align}
\end{lemma}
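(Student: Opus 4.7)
The two estimates are symmetric under the substitution $x\mapsto 1-x$ (which swaps $\mu_0\leftrightarrow\mu_1$ and the two fine regions), so I will focus on \eqref{lem:e-E0}. The idea is to rewrite both $h_i^m$ and $e^{-p\mu_0 x_i}$ in terms of values of the mesh-generating function $\varphi_0$ and then exploit cancellation between them.

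Since $x_i=\tfrac{\tau}{p\mu_0}\varphi_0(t_i)$ for $i=0,\ldots,N/4$, the exponential factor is immediately
\begin{equation*}
e^{-p\mu_0 x_i}=e^{-\tau\varphi_0(t_i)}.
\end{equation*}
For the mesh width I would apply the mean value theorem and the key identity $\varphi_0'(t)=4(1-\mu_0^{-1})e^{\varphi_0(t)}$, which comes from $1-4(1-\mu_0^{-1})t=e^{-\varphi_0(t)}$. Since $\varphi_0'$ is increasing on $[0,1/4)$, this gives
\begin{equation*}
h_i=\frac{\tau}{p\mu_0 N}\,\varphi_0'(\xi_i)\le \frac{4\tau(1-\mu_0^{-1})}{p\mu_0 N}\, e^{\varphi_0(t_{i+1})}.
\end{equation*}
Taking the $m$th power and multiplying, I obtain
\begin{equation*}
h_i^m e^{-p\mu_0 x_i}\le \left(\frac{4\tau}{p\mu_0 N}\right)^{\!m} e^{m[\varphi_0(t_{i+1})-\varphi_0(t_i)]}\, e^{-(\tau-m)\varphi_0(t_i)}.
\end{equation*}

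To conclude, I only need that the two exponential factors on the right are bounded by a constant independent of $N$, $\mu_0$, $i$, and $m$. Because $\varphi_0\ge 0$ and $m\le\tau$, the last factor is $\le 1$; this is exactly where the hypothesis $m\le\tau$ is used. For the first factor, the cutoff $i\le N/4-2$ forces $t_{i+1}\le 1/4-1/N$, and hence $1-4(1-\mu_0^{-1})t_{i+1}\ge \mu_0^{-1}+4(1-\mu_0^{-1})/N\ge 3/N$ by \eqref{eq:N to solutions} and $N\ge 16$. Therefore $\varphi_0'(t_{i+1})\le CN$, so by the mean value theorem $\varphi_0(t_{i+1})-\varphi_0(t_i)\le C$, and the first exponential factor is bounded uniformly. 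This delivers $h_i^m e^{-p\mu_0 x_i}\le C\mu_0^{-m}N^{-m}$.

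The main obstacle, such as it is, is identifying the algebraic manipulation that lets the exponential coming from $h_i$ partially cancel the one from $e^{-p\mu_0 x_i}$, with only a $(\mu_0 N)^{-m}$ prefactor left behind; once the factor $e^{m[\varphi_0(t_{i+1})-\varphi_0(t_i)]}e^{-(\tau-m)\varphi_0(t_i)}$ is isolated, the role of both $m\le\tau$ and the index cutoff become transparent. The second estimate \eqref{lem:e-E1} follows line for line by applying the same argument to $\varphi_1$ on $[3/4,1]$, where now $|\varphi_1'|$ is maximized at the left endpoint $t_i$ (so one uses $e^{\varphi_1(t_i)}$ in the bound on $h_i$ and pairs it with $e^{-\tau\varphi_1(t_{i+1})}$ from $e^{-p\mu_1(1-x_{i+1})}$).
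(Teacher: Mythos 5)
Your proof is correct, and it is essentially the standard argument: the paper itself gives no proof of this lemma (it only points to Lemma~2 of \cite{Zhan1Liu2:2020-Optimal}), and the computation there --- as in the authors' own Lemma~\ref{Ti}, which writes everything explicitly in terms of $\xi=4(1-\mu_0^{-1})$ and the factors $1-\xi iN^{-1}=e^{-\varphi_0(t_i)}$ --- performs exactly the cancellation you isolate via the identity $\varphi_0'(t)=4(1-\mu_0^{-1})e^{\varphi_0(t)}$ together with $m\le\tau$ and the index cutoff $i\le N/4-2$. Your mean-value-theorem phrasing is just a slightly more abstract packaging of the same estimate, and the symmetric treatment of \eqref{lem:e-E1} with $|\varphi_1'|$ maximized at the left endpoint is likewise correct.
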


Here we introduce a very interesting lemma as follows, which plays a key role in the later analysis.
\begin{lemma}\label{Ti}
Assume $\tau\geq \frac{5}{2}$. On the Bakhvalov-type mesh \eqref{eq:mesh-points}, one has
\begin{equation*}
\max\limits_{i=0,1,\cdots,\frac{N}{4}-3}\{(h_{i+1}-h_i)e^{-p\mu_0x_{i+1}}\}\leq  C\mu_0^{-1}N^{-2}.
\end{equation*}
\end{lemma}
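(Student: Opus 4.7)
The plan is to exploit the fact that on the left fine sub-mesh, $x_i = \frac{\tau}{p\mu_0}\varphi_0(t_i)$ with $t_i = i/N$, so the second difference
\[
h_{i+1}-h_i = x_{i+2}-2x_{i+1}+x_i
\]
can be estimated via a second-order Taylor/mean-value identity. Writing $\alpha:=4(1-\mu_0^{-1})\in(0,4)$, one has $\varphi_0(t)=-\ln(1-\alpha t)$ and $\varphi_0''(t)=\alpha^2(1-\alpha t)^{-2}=\alpha^2 e^{2\varphi_0(t)}$. The classical identity $f(t+N^{-1})-2f(t)+f(t-N^{-1})=f''(\xi)N^{-2}$ applied to $f=(\tau/(p\mu_0))\varphi_0$ at $t=t_{i+1}$ yields some $\xi_i\in(t_i,t_{i+2})$ with
\[
h_{i+1}-h_i=\frac{\tau}{p\mu_0 N^2}\,\varphi_0''(\xi_i)=\frac{\tau\alpha^2}{p\mu_0 N^2}\,e^{2\varphi_0(\xi_i)}.
\]
Combined with the identity $e^{-p\mu_0 x_{i+1}}=e^{-\tau\varphi_0(t_{i+1})}$, the claim therefore reduces to proving the uniform bound
\[
e^{2\varphi_0(\xi_i)-\tau\varphi_0(t_{i+1})}\le C.
\]

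To verify this, I would distinguish two cases. If $\xi_i\le t_{i+1}$, the monotonicity of $\varphi_0$ on $[0,1/4]$ and $\tau\ge 5/2>2$ immediately give $2\varphi_0(\xi_i)-\tau\varphi_0(t_{i+1})\le (2-\tau)\varphi_0(t_{i+1})\le 0$, since $\varphi_0(t_{i+1})\ge\varphi_0(0)=0$. If $\xi_i>t_{i+1}$, then $\xi_i\in(t_{i+1},t_{i+2})$, and the mean-value theorem gives $\varphi_0(\xi_i)-\varphi_0(t_{i+1})\le N^{-1}\sup_{t_{i+1}\le\eta\le t_{i+2}}\varphi_0'(\eta)$. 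Combining the two cases reduces everything to a uniform bound $\varphi_0'(\eta)\le C N$ on that interval.

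The technical core of the proof is this last estimate, and it is where the restriction $i\le N/4-3$ and the standing assumption $\mu_0^{-1}\le N^{-1}$ from \eqref{eq:N to solutions} enter decisively. Because $\eta\le t_{i+2}\le (N/4-1)/N=1/4-N^{-1}$, I get
\[
1-\alpha\eta\ge 1-\alpha\bigl(\tfrac14-N^{-1}\bigr)=\mu_0^{-1}+\alpha N^{-1}\ge \alpha N^{-1},
\]
using that $1-\alpha/4=\mu_0^{-1}$ by definition. Consequently $\varphi_0'(\eta)=\alpha/(1-\alpha\eta)\le N$, so $\varphi_0(\xi_i)-\varphi_0(t_{i+1})\le 1$ and altogether $2\varphi_0(\xi_i)-\tau\varphi_0(t_{i+1})\le 2+(2-\tau)\varphi_0(t_{i+1})\le 2$. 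Plugging $e^{2\varphi_0(\xi_i)-\tau\varphi_0(t_{i+1})}\le e^2$ and $\alpha^2\le 16$ back into the expression for $h_{i+1}-h_i$ yields the desired inequality $(h_{i+1}-h_i)e^{-p\mu_0 x_{i+1}}\le C\mu_0^{-1}N^{-2}$.

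The main obstacle is the last estimate: without the sharp identity $1-\alpha/4=\mu_0^{-1}$ and the precise use of the endpoint index $i=N/4-3$, the lower bound for $1-\alpha\eta$ degenerates and one loses a factor of $N$. Everything else is a clean Taylor expansion and a routine case split using $\tau\ge 5/2$.
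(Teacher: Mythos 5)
Your proof is correct, and it takes a genuinely different route from the paper's. The paper writes $T_i=(h_{i+1}-h_i)e^{-p\mu_0x_{i+1}}$ out explicitly as a product of a logarithmic factor and the power $(1-\xi t_{i+1})^{\tau}$, bounds the two endpoint values $T_0$ and $T_{N/4-3}$ directly, and then argues via the sign of the derivative of the continuous extension $T(s)$ (through the auxiliary function $F(t)=2t-\tau\ln(1+t)$) that the maximum over $s\in[0,N/4-3]$ must be attained at an endpoint; this is where the hypothesis $\tau\ge\frac52$ is invoked. You instead prove a \emph{pointwise} bound for every $i$ at once: the second-difference mean-value identity $h_{i+1}-h_i=\frac{\tau}{p\mu_0N^2}\varphi_0''(\xi_i)$ together with $\varphi_0''=\alpha^2e^{2\varphi_0}$ and $e^{-p\mu_0x_{i+1}}=e^{-\tau\varphi_0(t_{i+1})}$ reduces everything to $e^{2\varphi_0(\xi_i)-\tau\varphi_0(t_{i+1})}\le C$, which you settle by a clean case split plus the gradient bound $\varphi_0'\le N$ on $[0,\tfrac14-N^{-1}]$ (the computation $1-\alpha(\tfrac14-N^{-1})=\mu_0^{-1}+\alpha N^{-1}\ge\alpha N^{-1}$ is exactly right, and note it does not actually need \eqref{eq:N to solutions}, only $\mu_0>1$). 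Your argument is more elementary and more local: it avoids the paper's somewhat delicate "decreasing, or decreasing-then-increasing" monotonicity discussion, it only requires $\tau\ge 2$ rather than $\tau\ge\frac52$, and it generalizes more readily to other mesh-generating functions with $\varphi''$ controlled by $e^{2\varphi}$. What the paper's endpoint computation buys in exchange is the sharper information that $T_{N/4-3}\le C\mu_0^{-1}N^{-\tau}$ near the transition point, but that extra decay is not used in the lemma or later in the paper, so nothing is lost by your approach.
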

\begin{proof}
Set $T_i=(h_{i+1}-h_i)e^{-p\mu_0x_{i+1}}$ for $i=0,1,\cdots,\frac{N}{4}-3.$ 
From \eqref{eq:mesh-points} we obtain 
\begin{equation*}\label{eq:Ti-definition}
\begin{aligned}
T_i&=(h_{i+1}-h_i)e^{-p\mu_0x_{i+1}}\\
&=\frac{\tau}{p\mu_0}\ln \left(\frac{(1-\xi(i+1)N^{-1})^2}{[1-\xi(i+2)N^{-1}][1-\xi iN^{-1}]}\right)\cdot(1-\xi(i+1)N^{-1})^{\tau}\\
&=C\mu_0^{-1}\ln \left(1+\frac{\xi^2N^{-2}}{[1-\xi(i+2)N^{-1}][1-\xi iN^{-1}]}\right)\cdot(1-\xi(i+1)N^{-1})^{\tau},
\end{aligned}
\end{equation*}
where $\xi=4(1-\mu_0^{-1}).$
Thus  we have
\begin{equation}\label{eq:T0}
\begin{aligned}
T_0&=C\mu_0^{-1}\ln \left(1+\frac{\xi^2N^{-2}}{(1-2\xi N^{-1})}\right)\cdot(1-\xi N^{-1})^{\tau}\\
&\leq C\mu_0^{-1}\ln \left(1+\frac{\xi^2N^{-2}}{(1-2\xi N^{-1})}\right)\\
&\leq C\mu_0^{-1}\frac{\xi^2N^{-2}}{1-2\xi N^{-1}}\\
&\leq C\mu_0^{-1}N^{-2},
\end{aligned}
\end{equation}
and from \eqref{eq:N to solutions} we can get
\begin{equation}\label{eq:B-N/4-3}
\begin{aligned}
&T_{\frac{N}{4}-3}\\
&=C\mu_0^{-1}\ln \left(1+\frac{\xi^2N^{-2}}{[1-\xi(\frac{1}{4}-N^{-1})][1-\xi(\frac{1}{4}-3N^{-1})]}\right)\cdot(1-\xi(\frac{1}{4}-2N^{-1}))^{\tau}\\
&=C\mu_0^{-1}\ln \left(1+\frac{\xi^2N^{-2}}{[\mu_0^{-1}+\xi N^{-1})(\mu_0^{-1}+3\xi N^{-1}]}\right)\cdot(\mu_0^{-1}+\xi N^{-1})^{\tau}\\
&\leq C\mu_0^{-1}\ln \left(1+\frac{1}{3}\right)\cdot(N^{-1}+\xi N^{-1})^{\tau}\\
&\leq C\mu_0^{-1}N^{-\tau}\leq C\mu_0^{-1}N^{-\frac{5}{2}}.
\end{aligned}
\end{equation}

Next, we set $T(s)=C\mu_0^{-1}\ln \left(1+\frac{\xi^2N^{-2}}{[1-\xi(s+2)N^{-1}][1-\xi sN^{-1}]}\right)\cdot(1-\xi(s+1)N^{-1})^{\tau}$ for $s\in[0,\frac{N}{4}-3]$, 
then we take the derivative of $T(s)$ to analyze the monotonicity of $T(s)$:
\begin{equation*}\label{eq:Ti-derivation}
T'(s)=M\left(2Q(s)-\tau\ln (1+Q(s))\right),
\end{equation*}
where $M= \xi N^{-1}(1-\xi(s+1) N^{-1})^{\tau-1}$ and
\begin{align*}
Q(s)&=\frac{\xi^2N^{-2}}{(1-\xi(s+2)N^{-1})(1-\xi sN^{-1})}\\
&\in\left[\frac{\xi^2N^{-2}}{1-2\xi N^{-1}},\frac{\xi^2N^{-2}}{(\mu_0^{-1}+\xi N^{-1})(\mu_0^{-1}+3\xi N^{-1})}\right].
\end{align*}
Obviously  $M>0$. So we just analyze the positive and negative properties of $2Q(s)-\tau\ln (1+Q(s))$
and
 set 
$$F(t)=2t-\tau\ln (1+t)\quad\quad t\in[0,+\infty).$$
After a brief analysis, we found that when $\tau\geq\frac{5}{2}$, $F(0)=0$, $F(t)<0$  in $[0,t_*)$ and $F(t)\ge0$ in $[t_*,+\infty]$ for some $t_*\in(0,+\infty)$.

Therefore, when $\tau\geq \frac{5}{2}$, $T(s)$ decreases monotonously as $s$ increases, or decreases first and then increases, or increases monotonously. To sum up, we can get $\max\limits_{s\in[0,\frac{N}{4}-3]}\{T(s)\}= \max\{T(0),T(\frac{N}{4}-3)\}$ . And then combine \eqref{eq:T0} and \eqref{eq:B-N/4-3} to get our conclusion.
\end{proof}

\section{Interpolation errors}
Now we introduce a new interpolation defined in \cite{Zhan1Liu2:2020-Optimal} for our uniform convergence and briefly describe the structure of this interpolation.

For any $v\in C^0(\bar{\Omega})$ its Lagrange interpolation $v^I$ on the  Bakhvalov-type  mesh \eqref{eq:mesh-points} is defined by
$$v^I=\sum_{i=0}^{N}v(x_i)\theta_i(x),$$
where $\theta_i(x) $ is the piecewise linear  polynomial satisfying  the conditions $\theta_i(x_j)=\delta_{ij}$ for $i,j=0,\ldots,N$. Here $\delta_{ij}$ is the Kronecker symbol.
We define the interpolation $\Pi u$ to the solution $u$ by
\begin{equation*}\label{eq:Interpolation-u}
\Pi u=S^I+E_0^I+\pi E_1, 
\end{equation*}
where $S^I$ and $E_0^I$ are the Lagrange interpolation to $S$ and $E_0$, respectively. And
\begin{align}
\pi E_1=&\sum_{i=0}^{\frac{3N}{4}}E_1(x_i)\theta_i(x)+\sum_{i=\frac{3N}{4}+2}^NE_1(x_i)\theta_i(x).\label{eq:Interpolation-E1}
\end{align}
Define
\begin{equation*}\label{eq:Interpolation-PE1}
(P E_1)(x)=E_1(x_{\frac{3N}{4}+1})\theta_{\frac{3N}{4}+1}(x), 
\end{equation*}
and clearly we have  $\pi E_1,  \Pi u\in V^N$ 
and
\begin{align}
&(\pi E_1)(x)=E_1^I-(PE_1)(x), \label{eq:P_jE_j}\\
&\Pi u=u^I-(P E_1)(x), \label{eq:Interpolation-PEj-Ej}\\ 
&\pi E_1\vert_{[x_0, x_{\frac{3N}{4}}]\cup[x_{\frac{3N}{4}+2}, x_N]}
=E_1^I\vert_{[x_0, x_{\frac{3N}{4}}]\cup[x_{\frac{3N}{4}+2}, x_N]},\label{eq:Interpolation-PE1-E1}
\end{align}
where $E_1^I$ and $u^I$ are Lagrange interpolation of $E_1$ and $u^I$, respectively.

Similar to \cite[(38)]{Zhan1Liu2:2020-Optimal},  the following lemma can be proved.
\begin{lemma}\label{eq:PE-norm}
Assume $\tau\geq \frac{5}{2}$. Then one has
\begin{equation*}\label{eq:PE-energy-norm}
\Vert (PE_1)(x)\Vert_E
 \leq CN^{-\frac{5}{2}}(\varepsilon_2+\varepsilon_1^{\frac{1}{2}})^{\frac{1}{2}}+CN^{-3}.
\end{equation*}
\end{lemma}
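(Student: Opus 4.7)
The plan is to exploit the fact that $(PE_1)(x)$ is nothing but the scalar $E_1(x_{\frac{3N}{4}+1})$ multiplied by the single hat function $\theta_{\frac{3N}{4}+1}$, so both contributions to the energy norm reduce to explicit expressions in the two adjacent mesh widths. First I would record the elementary formulas
\[
\|\theta_{\frac{3N}{4}+1}\|^2=\tfrac{1}{3}\bigl(h_{\frac{3N}{4}}+h_{\frac{3N}{4}+1}\bigr),
\qquad
|\theta_{\frac{3N}{4}+1}|_1^2=h_{\frac{3N}{4}}^{-1}+h_{\frac{3N}{4}+1}^{-1},
\]
and then apply Lemma~\ref{lem:mesh size} together with the assumption \eqref{eq:N to solutions} to conclude $\|\theta_{\frac{3N}{4}+1}\|^2\le CN^{-1}$ and $|\theta_{\frac{3N}{4}+1}|_1^2\le C\mu_1$.

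The key step is to bound the coefficient $|E_1(x_{\frac{3N}{4}+1})|$ by $CN^{-\tau}$, and this is where the Bakhvalov mesh is used in an essential way. Starting from the layer estimate \eqref{lem1:cof-1} with $k=0$, I would plug in the mesh formula \eqref{eq:mesh-points}, which gives $1-x_{\frac{3N}{4}+1}=\frac{\tau}{p\mu_1}\varphi_1(t_{\frac{3N}{4}+1})$ and hence
\[
e^{-p\mu_1(1-x_{\frac{3N}{4}+1})}
=e^{-\tau\varphi_1(t_{\frac{3N}{4}+1})}
=\bigl(1-4(1-\mu_1^{-1})(1-t_{\frac{3N}{4}+1})\bigr)^{\tau}.
\]
Since $1-t_{\frac{3N}{4}+1}=\tfrac14-\tfrac1N$, the base simplifies to $\mu_1^{-1}+4N^{-1}(1-\mu_1^{-1})$, which by \eqref{eq:N to solutions} is at most $5N^{-1}$. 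Therefore $|E_1(x_{\frac{3N}{4}+1})|\le CN^{-\tau}\le CN^{-5/2}$ for $\tau\ge 5/2$. This is precisely the ``subtle mesh--layer relationship'' emphasized in the introduction.

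It remains to assemble the pieces. One gets $\|(PE_1)\|^2\le CN^{-5}\cdot N^{-1}=CN^{-6}$, while $\varepsilon_1|(PE_1)|_1^2\le CN^{-5}\varepsilon_1\mu_1\le CN^{-5}(\varepsilon_2+\varepsilon_1^{1/2})$ by \eqref{cos:conclusions-1}. Adding, taking square roots, and using $\sqrt{a+b}\le\sqrt{a}+\sqrt{b}$ yields the stated bound. The only real obstacle is the coefficient estimate above: once one sees that the nodal value of $E_1$ at $x_{\frac{3N}{4}+1}$ acquires a factor $N^{-\tau}$ directly from the mesh generating function $\varphi_1$, the rest is bookkeeping; everywhere else one is simply using the hat-function formulas and the standard mesh-size bounds from Lemma~\ref{lem:mesh size}.
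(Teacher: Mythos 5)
Your proof is correct, and it follows essentially the same route as the paper, which omits the argument by referring to \cite[(38)]{Zhan1Liu2:2020-Optimal}: write $PE_1$ as the nodal value times a single hat function, bound $\Vert\theta_{\frac{3N}{4}+1}\Vert^2\le CN^{-1}$ and $\vert\theta_{\frac{3N}{4}+1}\vert_1^2\le C\mu_1$ from Lemma~\ref{lem:mesh size}, and extract $\vert E_1(x_{\frac{3N}{4}+1})\vert\le CN^{-\tau}$ from the mesh generating function (the same coefficient bound the paper uses inside \eqref{eq:VII-2-1}), finishing with $\varepsilon_1\mu_1\le C(\varepsilon_2+\varepsilon_1^{1/2})$ from \eqref{cos:conclusions-1}. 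All steps check out.
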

\begin{remark}\label{eq:remark2}
It can be seen from the definition that the new interpolation introduced by us has the advantages of simple structure and easy analysis compared with quasi-interpolation(see \cite{Brda1Zari2:2016-singularly}). Moreover, our interpolation can be easily extended to general problems, such as higher-order finite element methods and higher-dimensional singular perturbation problems.
\end{remark}
Next we will give some results of Lagrange interpolation.
Standard interpolation theories yield
\begin{equation}\label{eq:Interpolation-error}
\Vert v-v^I\Vert_{W^{l, q}(I_i)}\leq Ch_i^{2-l+\frac{1}{q}-\frac{1}{p}}\vert v\vert_{W^{2, p}(I_i)}, 
\end{equation}
for all $v\in W^{2, p}(I_i),  i=0,  1,  \cdots,  N-1, $ where $l=0, 1$ and $1\leq p, q\leq \infty$.
\begin{lemma}\label{eq:Interpolation-L2}
Assume $\tau\geq \frac{5}{2}$. On the Bakhvalov-type mesh \eqref{eq:mesh-points},  one has
\begin{align*}
&\Vert E_j-E_j^I\Vert\leq CN^{-\frac{5}{2}},\quad j=0,1,\\
&\Vert S-S^I\Vert+\Vert u-u^I\Vert\leq CN^{-2}.
\end{align*}
\end{lemma}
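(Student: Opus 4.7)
The strategy is to split $\Omega$ into the fine layer intervals, the transition intervals $I_{N/4-1}$ and $I_{3N/4}$, and the coarse region, bound each piece separately with the most economical form of the standard interpolation inequality \eqref{eq:Interpolation-error}, and finally assemble $u-u^I$ through the decomposition $u=S+E_0+E_1$. For the regular part the bound is immediate: since $|S''|\le C$ uniformly on $\bar\Omega$ and Lemma~\ref{lem:mesh size} together with \eqref{eq:N to solutions} give $h_i\le CN^{-1}$ on every subinterval, \eqref{eq:Interpolation-error} with $l=0,\,q=p=\infty$ yields $\|S-S^I\|_{L^\infty(I_i)}\le Ch_i^2\le CN^{-2}$, and integrating over $\Omega$ gives the claimed $\|S-S^I\|\le CN^{-2}$.

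The nontrivial piece is $\|E_0-E_0^I\|$. On a fine interval $I_i$ with $0\le i\le N/4-2$ I would apply \eqref{eq:Interpolation-error} in the form $l=0,\,q=2,\,p=\infty$ to obtain
\begin{equation*}
\|E_0-E_0^I\|_{L^2(I_i)}\le Ch_i^{5/2}\|E_0''\|_{L^\infty(I_i)}\le C\mu_0^{2}h_i^{5/2}e^{-p\mu_0 x_i},
\end{equation*}
and then invoke Lemma~\ref{lem:Error-condition} with the fractional exponent $m=5/2$, which is admissible exactly because $\tau\ge 5/2$. This produces $\|E_0-E_0^I\|^2_{L^2(I_i)}\le C\mu_0^{-1}N^{-5}$; summing the at most $N/4$ such intervals and using $\mu_0^{-1}\le N^{-1}$ from \eqref{eq:N to solutions} delivers a contribution $\le CN^{-5/2}$. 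On the transition interval $I_{N/4-1}$ Lemma~\ref{lem:Error-condition} is not available, but the layer itself is already tiny: a direct computation of the form $e^{-p\mu_0 x_{N/4-1}}=(1-\xi t_{N/4-1})^{\tau}\le CN^{-\tau}$, in the same spirit as the proof of Lemma~\ref{Ti}, combined with the crude triangle bound $\|E_0-E_0^I\|_{L^2(I_{N/4-1})}\le\|E_0\|_{L^2(I_{N/4-1})}+\|E_0^I\|_{L^2(I_{N/4-1})}$, gives a contribution of order $\mu_0^{-1/2}N^{-\tau}+h_{N/4-1}^{1/2}N^{-\tau}\le CN^{-3}$. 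On the coarse region $\Omega_e\cup\Omega_1$ both $|E_0|$ and $|E_0^I|$ are uniformly bounded by $Ce^{-p\mu_0\sigma_0}=C\mu_0^{-\tau}\le CN^{-\tau}\le CN^{-5/2}$. Adding the three pieces yields $\|E_0-E_0^I\|\le CN^{-5/2}$, and the analogous argument on the reflected mesh, using \eqref{lem:e-E1} in place of \eqref{lem:e-E0}, gives $\|E_1-E_1^I\|\le CN^{-5/2}$. Finally the triangle inequality $\|u-u^I\|\le\|S-S^I\|+\|E_0-E_0^I\|+\|E_1-E_1^I\|$ produces the stated $CN^{-2}$ estimate, dominated by the regular term.

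The main obstacle is improving the classical $N^{-2}$ local estimate in the layer to the sharper $N^{-5/2}$. This gain comes entirely from the freedom to invoke Lemma~\ref{lem:Error-condition} with the fractional exponent $m=5/2$; the hypothesis $\tau\ge 5/2$ enters solely to make this choice admissible, matching exactly the threshold that already appeared in Lemmas~\ref{Ti} and~\ref{eq:PE-norm}. A secondary, more technical, difficulty is that Lemma~\ref{lem:Error-condition} does not cover the transition interval $I_{N/4-1}$, whose mesh size is $O(N^{-1})$ rather than $O(\mu_0^{-1})$; this is bypassed cleanly by estimating $\|E_0\|_{L^2(I_{N/4-1})}$ directly from the exponential decay of $E_0$ and the computed value of $e^{-p\mu_0 x_{N/4-1}}$.
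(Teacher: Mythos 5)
Your proposal is correct and follows essentially the same route as the paper: the standard interpolation estimate combined with Lemma~\ref{lem:Error-condition} at $m=\tfrac{5}{2}$ on the fine subintervals, the crude triangle bound plus the decay $e^{-p\mu_0 x_{N/4-1}}\le CN^{-\tau}$ on the remainder (the paper merely lumps the transition interval and the coarse region into a single sum), and the solution decomposition with the trivial $O(N^{-2})$ bound for $S$ dominating in the end.
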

\begin{proof}
For $0\leq i\leq \frac{N}{4}-2$,  we use \eqref{eq:Interpolation-error},  \eqref{lem1:cof-1} and \eqref{lem:e-E0} with $m=\frac{5}{2}$ to obtain
$$
\begin{aligned}
\sum_{i=0}^{\frac{N}{4}-2}\Vert E_0-E_0^I\Vert_{I_i}^2&\leq C\sum_{i=0}^{\frac{N}{4}-2}h_i^4\Vert E_0''\Vert_{I_i}^2
\leq C\sum_{i=0}^{\frac{N}{4}-2}h_i^5\Vert E_0''\Vert_{L^\infty(I_i)}^2\\
&\leq C\sum_{i=0}^{\frac{N}{4}-2}h_i^5\mu_0^4e^{-2p\mu_0x_i}=C\sum_{i=0}^{\frac{N}{4}-2}\mu_0^4(h_i^{\frac{5}{2}}e^{-p\mu_0x_i})^2\\
&\leq C\mu_0^{-1}N^{-4}.
\end{aligned}\\$$
For $\frac{N}{4}-1\leq i\leq N-1$,   we use the triangle inequality,  Lemma \ref{lem:mesh size} and \eqref{lem1:cof-1} to get
$$
\begin{aligned}
\sum_{i=\frac{N}{4}-1}^{N-1}\Vert E_0-E_0^I\Vert_{I_i}^2&\leq C\sum_{i=\frac{N}{4}-1}^{N-1}h_i\Vert E_0-E_0^I\Vert_{L^\infty(I_i)}^2
\leq C\sum_{i=\frac{N}{4}-1}^{N-1}h_i\Vert E_0\Vert_{L^\infty(I_i)}^2\\
&\leq C\sum_{i=\frac{N}{4}-1}^{N-1}h_ie^{-2p\mu_0x_{\frac{N}{4}-1}}
\leq C\sum_{i=\frac{N}{4}-1}^{N-1}N^{-1}N^{-2\tau}\\
&\leq CN^{-5}.
\end{aligned}\\$$
From \eqref{eq:N to solutions},  we have
$$\Vert E_0-E_0^I\Vert \leq C(\mu_0^{-1}N^{-4}+N^{-5})^{\frac{1}{2}}\leq CN^{-\frac{5}{2}}.$$

By similar arguments we can get the interpolation error of $E_1$.
Standard arguments yield
$
\Vert S-S^I\Vert \leq CN^{-2}$.
Thus we are done.
\end{proof}
\begin{lemma}\label{eq:Interpolation-H1}
Assume $\tau\geq \frac{5}{2}$.  On the Bakhvalov-type  mesh \eqref{eq:mesh-points},   one has
\begin{align*}
&\vert E_j-E_j^I\vert_{1}\leq C\mu_j^{\frac{1}{2}}N^{-1}  \qquad j=0,   1, \\
&\vert S-S^I\vert_{1}\leq CN^{-1}.
\end{align*}
\end{lemma}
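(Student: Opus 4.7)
The plan is to prove each bound interval by interval, mimicking the structure of Lemma \ref{eq:Interpolation-L2} but with an extra factor of $\mu_j^{1/2}$ coming from differentiation. For $E_0-E_0^I$ I would split the mesh into the fine layer region $0\le i\le N/4-2$ and the rest, and for $S-S^I$ I would use the uniform bound on $S''$ directly.

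First I would treat the fine part of $E_0-E_0^I$. The standard interpolation estimate \eqref{eq:Interpolation-error} with $l=1$, $p=q=2$ gives $|E_0-E_0^I|_{1,I_i}^2\le Ch_i^3\|E_0''\|_{L^\infty(I_i)}^2\le Ch_i^3\mu_0^4e^{-2p\mu_0x_i}$ by \eqref{lem1:cof-1}. Since $\tau\ge 5/2$, I may apply Lemma \ref{lem:Error-condition} with $m=3/2$ to obtain $h_i^3e^{-2p\mu_0x_i}\le C\mu_0^{-3}N^{-3}$ for $0\le i\le N/4-2$, and then summing the $O(N)$ contributions gives a bound of order $\mu_0 N^{-2}$, i.e., $\mu_0^{1/2}N^{-1}$ after taking the square root. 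For the remaining indices $i\ge N/4-1$ I would invoke the triangle inequality together with the elementary slope bound $|E_0^I|_{1,I_i}^2\le|E_0|_{1,I_i}^2$ (which follows from Cauchy--Schwarz applied to $E_0(x_{i+1})-E_0(x_i)=\int_{I_i}E_0'\,dx$), so $\sum_{i\ge N/4-1}|E_0-E_0^I|_{1,I_i}^2\le C\int_{x_{N/4-1}}^1(E_0')^2\,dx\le C\mu_0 e^{-2p\mu_0 x_{N/4-1}}$. Evaluating $x_{N/4-1}$ from the mesh formula \eqref{eq:mesh-points} and using \eqref{eq:N to solutions} yields $e^{-p\mu_0 x_{N/4-1}}\le C N^{-\tau}\le CN^{-5/2}$, so this tail contribution is of order $\mu_0 N^{-5}\ll\mu_0 N^{-2}$, well within the target. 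The estimate for $E_1-E_1^I$ follows by the mirror-image argument.

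For $S-S^I$ I would use $|S-S^I|_{1,I_i}^2\le Ch_i^3\|S''\|_{L^\infty(I_i)}^2\le Ch_i^3$ thanks to \eqref{lem1:cof-1}. Summation splits naturally into the four mesh regions of Lemma \ref{lem:mesh size}: on $[x_0,x_{N/4-2}]$ all $h_i\le C\mu_0^{-1}$ and the total length is at most $\sigma_0\le 1/4$, giving $\sum h_i^3\le C\mu_0^{-2}\le CN^{-2}$ by \eqref{eq:N to solutions}; on the coarse middle, $h_i\le 2N^{-1}$ and the length is at most $1$, giving $\sum h_i^3\le CN^{-2}$; the two transition intervals contribute $O(N^{-3})$ each; and the right-hand layer is symmetric. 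Collecting, $|S-S^I|_1^2\le CN^{-2}$, which yields the claim. The main subtlety to control is the transition interval $I_{N/4-1}$ in the $E_0$ argument, where $h_{N/4-1}$ may be as large as $CN^{-1}$ while $E_0$ is no longer small in a pointwise sense usable via Lemma \ref{lem:Error-condition}; the key realization is that the requirement $\tau\ge 5/2$ makes $e^{-p\mu_0 x_{N/4-1}}$ small enough (namely $O(N^{-5/2})$) that the triangle-inequality / $|E_0^I|_1\le|E_0|_1$ trick absorbs it.
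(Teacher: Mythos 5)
Your proof is correct, and its core is the same as the paper's: in the fine region $0\le i\le N/4-2$ both arguments use the interpolation estimate \eqref{eq:Interpolation-error} together with \eqref{lem1:cof-1} and Lemma~\ref{lem:Error-condition} with $m=\tfrac32$ to produce the dominant contribution $C\mu_0N^{-2}$, and for $S$ both rely on the uniform bound on $S''$ and the fact that every $h_i\le CN^{-1}$. Where you diverge is the tail $i\ge N/4-1$: the paper splits it further into the single transition interval $I_{N/4-1}$ (handled by the triangle inequality plus the \emph{inverse} inequality $\vert E_0^I\vert_{1,I}^2\le h^{-2}\Vert E_0^I\Vert_I^2$, which requires the lower bound $h_{N/4-1}\ge \tfrac{\tau}{2p}\mu_0^{-1}$ from Lemma~\ref{lem:mesh size}) and the coarse region $i\ge N/4$ (handled again via $h_i^3\Vert E_0''\Vert_{L^\infty(I_i)}^2$ with $e^{-2p\mu_0 x_{N/4}}=\mu_0^{-2\tau}$, giving $C\mu_0^{-1}N^{-2}$). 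You instead treat the whole tail at once through the $H^1$-stability of the nodal interpolant, $\vert E_0^I\vert_{1,I_i}\le\vert E_0\vert_{1,I_i}$ by Cauchy--Schwarz, reducing everything to the single integral $\int_{x_{N/4-1}}^1(E_0')^2\,dx\le C\mu_0 e^{-2p\mu_0x_{N/4-1}}\le C\mu_0N^{-2\tau}$; this is a genuine (if modest) simplification, since it needs neither the inverse inequality nor the lower mesh-size bound, and your final tail bound $C\mu_0N^{-5}$ is still absorbed by the leading term $C\mu_0N^{-2}$. Your identification of $e^{-p\mu_0x_{N/4-1}}\le CN^{-\tau}$ (via $x_{N/4-1}=\tfrac{\tau}{p\mu_0}\varphi_0(\tfrac14-\tfrac1N)$ and \eqref{eq:N to solutions}) as the crux of the transition interval matches exactly what the paper exploits.
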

\begin{proof}
For $0\leq i\leq \frac{N}{4}-2$,    from \eqref{eq:Interpolation-error},   \eqref{lem1:cof-1} and \eqref{lem:e-E0} with $m=\frac{3}{2}$,  we have
$$
\begin{aligned}
\sum_{i=0}^{\frac{N}{4}-2}\vert E_0-E_0^I\vert_{1,   I_i}^2
&\leq C\sum_{i=0}^{\frac{N}{4}-2}h_i^2\Vert E_0''\Vert_{I_i}^2
\leq C\sum_{i=0}^{\frac{N}{4}-2}h_i^3\Vert E_0''\Vert_{L^\infty(I_i)}^2\\
&\leq C\sum_{i=0}^{\frac{N}{4}-2}h_i^3\mu_0^4e^{-2p\mu_0x_i}
=C\sum_{i=0}^{\frac{N}{4}-2}\mu_0^4(h_i^{\frac{3}{2}}e^{-p\mu_0x_i})^2\\
&\leq C\sum_{i=0}^{\frac{N}{4}-2}\mu_0^4\mu_0^{-3}N^{-3}\leq C\mu_0N^{-2}.
\end{aligned}\\$$
For $i=\frac{N}{4}-1,$ from the triangle inequality, the inverse inequality, \eqref{lem1:cof-1} and Lemma \ref{lem:mesh size},   one has
$$\begin{aligned}
\vert E_0-E_0^I\vert_{1,I_{\frac{N}{4}-1}}^2&\leq C\vert E_0\vert_{1,I_{\frac{N}{4}-1}}^2+C\vert E_0^I\vert_{1,I_{\frac{N}{4}-1}}^2\\
&\leq C\int_{x_{\frac{N}{4}-1}}^{x_{\frac{N}{4}}}(E_0')^2dx+h_{\frac{N}{4}-1}^{-2}\Vert E_0^I\Vert_{I_{\frac{N}{4}-1}}^2\\
&\leq C\int_{x_{\frac{N}{4}-1}}^{x_{\frac{N}{4}}}\mu_0^2e^{-2p\mu_0x}dx+Ch_{\frac{N}{4}-1}^{-1}\Vert E_0\Vert_{L^\infty(I_{\frac{N}{4}-1})}^2\\
&\leq C\mu_0e^{-2p\mu_0x_{\frac{N}{4}-1}}+C\mu_0e^{-2p\mu_0x_{\frac{N}{4}-1}}\\
&\leq C\mu_0N^{-2\tau}\leq C\mu_0N^{-5}.
\end{aligned}$$
For $\frac{N}{4}\leq i\leq N-1,$  from \eqref{eq:Interpolation-error}, \eqref{lem1:cof-1} and Lemma \ref{lem:mesh size}, one has 
$$
\begin{aligned}
\sum_{i=\frac{N}{4}}^{N-1}\vert E_0-E_0^I\vert_{1, I_i}^2
&\leq C\sum_{i=\frac{N}{4}}^{N-1}h_i^2\Vert E_0''\Vert_{I_i}^2
\leq C\sum_{i=\frac{N}{4}}^{N-1}h_i^3\Vert E_0''\Vert_{L^\infty(I_i)}^2\\
&\leq C\sum_{i=\frac{N}{4}}^{N-1}h_i^3\mu_0^4e^{-2p\mu_0x_i}
\leq C\sum_{i=\frac{N}{4}}^{N-1}N^{-3}\mu_0^4e^{-2p\mu_0x_{\frac{N}{4}}}\\
&\leq CN^{-2}\mu_0^4\mu_0^{-2\tau}
\leq C\mu_0^{-1}N^{-2}.
\end{aligned}$$

Finally,   we conclude
$$\vert E_0-E_0^I\vert_{1}\leq C(\mu_0N^{-2}+\mu_0N^{-5}+C\mu_0^{-1}N^{-2})^{\frac{1}{2}}\leq C\mu_0^{\frac{1}{2}}N^{-1}.$$

The estimate for $\vert E_1-E_1^I\vert_1$ can be similarly derived. Standard arguments yield the bound of 
$\vert S-S^I\vert_{1}$.
\end{proof}

From \eqref{cos:conclusions-1}, Lemmas \ref{eq:Interpolation-L2} and   \ref{eq:Interpolation-H1},  we  obtain the following theorem easily. 
\begin{theorem}\label{eq:Interpolation-energy-norm}
Assume $\tau\geq \frac{5}{2}$. Then one has
\begin{equation*}\label{eq:interpolation-energy-norm}
\Vert u-u^I\Vert_E\leq C(\varepsilon_2+\varepsilon_1^{\frac{1}{2}})^{\frac{1}{2}}N^{-1}+CN^{-2}.
\end{equation*}
\end{theorem}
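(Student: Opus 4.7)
The plan is to observe that the theorem is essentially a direct corollary of Lemmas \ref{eq:Interpolation-L2} and \ref{eq:Interpolation-H1} combined with the bound on $\varepsilon_1\mu_1$ from \eqref{cos:conclusions-1}. By definition of the energy norm,
$$
\Vert u-u^I\Vert_E^2 = \varepsilon_1|u-u^I|_1^2 + \Vert u-u^I\Vert^2,
$$
so I would bound the two pieces separately.

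For the $L^2$ part, the statement $\Vert u-u^I\Vert\le CN^{-2}$ is already built into Lemma \ref{eq:Interpolation-L2} (and could alternatively be recovered by writing $u-u^I=(S-S^I)+(E_0-E_0^I)+(E_1-E_1^I)$, applying the triangle inequality, and combining the $CN^{-2}$ bound on the regular part with the sharper $CN^{-5/2}$ bounds on the two layer terms). This yields $\Vert u-u^I\Vert^2\le CN^{-4}$, which accounts for the $CN^{-2}$ contribution after taking the square root.

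For the $H^1$-seminorm part, the triangle inequality together with Lemma \ref{eq:Interpolation-H1} gives
$$
|u-u^I|_1 \le |S-S^I|_1 + |E_0-E_0^I|_1 + |E_1-E_1^I|_1 \le C\left(N^{-1} + \mu_0^{1/2}N^{-1} + \mu_1^{1/2}N^{-1}\right).
$$
Using $1\le\mu_0\le\mu_1$ (from \eqref{eq:N to solutions} and the ordering of the characteristic roots), this simplifies to $|u-u^I|_1 \le C\mu_1^{1/2}N^{-1}$, so $\varepsilon_1|u-u^I|_1^2 \le C\varepsilon_1\mu_1 N^{-2}$. Now I would invoke \eqref{cos:conclusions-1}, which provides exactly $\varepsilon_1\mu_1\le C(\varepsilon_2+\varepsilon_1^{1/2})$, giving $\varepsilon_1|u-u^I|_1^2 \le C(\varepsilon_2+\varepsilon_1^{1/2})N^{-2}$.

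Adding the two contributions yields $\Vert u-u^I\Vert_E^2 \le C(\varepsilon_2+\varepsilon_1^{1/2})N^{-2} + CN^{-4}$, and taking the square root (using $\sqrt{a+b}\le\sqrt{a}+\sqrt{b}$) produces the claimed bound. There is no real obstacle here; the only subtlety is remembering that the $\varepsilon_1\mu_1$ factor coming from the stronger layer is absorbed precisely by \eqref{cos:conclusions-1}, which is exactly the kind of parameter book-keeping that makes the two-parameter setting delicate. All heavy lifting (in particular the Bakhvalov-type mesh geometry and the delicate $N^{-5/2}$ bound) has already been done in the preceding lemmas.
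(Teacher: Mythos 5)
Your proposal is correct and follows exactly the route the paper intends: the paper gives no details beyond citing \eqref{cos:conclusions-1} together with Lemmas \ref{eq:Interpolation-L2} and \ref{eq:Interpolation-H1}, and your assembly of those ingredients (bounding $|u-u^I|_1$ by $C\mu_1^{1/2}N^{-1}$ and absorbing $\varepsilon_1\mu_1$ via $\varepsilon_1\mu_1\le C(\varepsilon_2+\varepsilon_1^{1/2})$) is precisely the omitted computation.
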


\section{Uniform convergence}
Introduce $\chi:=\Pi u-u^N$. From \eqref{eq:coercivity},  the Galerkin orthogonality,  \eqref{eq:u-decomposition},  \eqref{eq:Interpolation-PEj-Ej} and integration by parts for $\displaystyle\int_0^1b(\pi E_1-E_1)'\chi\mr{d}x, $   one has
\begin{equation}\label{eq:uniform-convergence-1}
\begin{aligned}
&\alpha\Vert \chi\Vert_E^2\leq a(\chi,   \chi)=a(\Pi u-u,   \chi)\\
&=\varepsilon_1\int_0^1(u^I-u)'\chi'dx-\varepsilon_1\int_0^1(P E_1)'\chi'dx\\
&+\varepsilon_2\int_0^1b(S^I-S)'\chi dx+\varepsilon_2\int_0^1b( E_0^I-E_0)'\chi dx\\
&-\varepsilon_2\int_0^1b(\pi E_1-E_1)\chi'dx-\varepsilon_2\int_0^1b'(\pi E_1-E_1)\chi dx\\
&+\int_0^1c(u^I-u)\chi dx-\int_0^1c(P E_1)\chi dx\\
&=\uppercase\expandafter{\romannumeral1}+\uppercase\expandafter{\romannumeral2}
+\uppercase\expandafter{\romannumeral3}+\uppercase\expandafter{\romannumeral4}
+\uppercase\expandafter{\romannumeral5}+\uppercase\expandafter{\romannumeral6}
+\uppercase\expandafter{\romannumeral7}+\uppercase\expandafter{\romannumeral8}.
\end{aligned}
\end{equation}
In the following we will analyze each term in the right-hand side of \eqref{eq:uniform-convergence-1}.

From integration by parts we  first prove
\begin{equation}\label{eq:I}
\uppercase\expandafter{\romannumeral1}=0.
\end{equation}

From H\"{o}lder inequalities, \eqref{eq:P_jE_j},  Lemmas  \ref{eq:PE-norm} and \ref{eq:Interpolation-L2},   we obtain
\begin{equation}\label{eq:II+VI+VII+VIII}
\begin{aligned}
&\vert(\uppercase\expandafter{\romannumeral2}+\uppercase\expandafter{\romannumeral8})+
(\uppercase\expandafter{\romannumeral6}+\uppercase\expandafter{\romannumeral7})\vert\\
\leq & C\Vert PE_1\Vert _E\Vert \chi\Vert _E
+C(\varepsilon_2\Vert E_1-E_1^I\Vert
+\varepsilon_2\Vert PE_1\Vert+\Vert u-u^I\Vert)\Vert\chi\Vert\\
\leq &CN^{-2}\Vert\chi\Vert_E.
\end{aligned}
\end{equation}

Imitating the analysis in Lemma 4 in \cite{Zhan1Liu2:2021-Supercloseness}.  One has 
\begin{equation}\label{eq:III}
\vert\uppercase\expandafter{\romannumeral3}\vert\leq C\varepsilon_2^{\frac{1}{2}}N^{-2}\ln^{\frac{1}{2}}N\Vert \chi\Vert_E.
\end{equation}

In the following two lemmas we will give the estimates of  \uppercase\expandafter{\romannumeral4} and \uppercase\expandafter{\romannumeral5}.
\begin{lemma}\label{eq:IV-A}
Assume $\tau\geq \frac{5}{2}$.  On the Bakhvalov-type  mesh \eqref{eq:mesh-points},   one has
\begin{equation*}
\vert\uppercase\expandafter{\romannumeral4}\vert\leq  C(\varepsilon_2^{\frac{1}{2}}N^{-2}\ln^{\frac{1}{2}}N+N^{-3})\Vert\chi\Vert_E.
\end{equation*}
\end{lemma}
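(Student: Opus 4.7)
The plan is to split $\mathrm{IV}$ into an ``outer'' part on $[\sigma_0,1]$ (where $E_0$ is exponentially negligible) and a ``layer'' part on $[0,\sigma_0]$, and to treat them separately. For the outer part I will apply Cauchy--Schwarz together with the $H^1$-interpolation estimate $\Vert(E_0^I-E_0)'\Vert_{[\sigma_0,1]}\leq C\mu_0^{-1/2}N^{-1}$, which is produced implicitly in the proof of Lemma \ref{eq:Interpolation-H1}, combined with the relation $\varepsilon_2\leq C\mu_0^{-1}\leq CN^{-1}$ that follows from \eqref{cos:conclusions-2} and \eqref{eq:N to solutions}. Since $\varepsilon_2/\sqrt{\mu_0}\leq \sqrt{\varepsilon_2}\cdot\mu_0^{-1/2}\leq \sqrt{\varepsilon_2}\,N^{-1}$, this immediately delivers a contribution of order $\varepsilon_2^{1/2}N^{-2}\Vert\chi\Vert_E$.

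For the layer part I will integrate by parts on each cell $I_i$. Because $E_0^I-E_0$ vanishes at every mesh node, all boundary contributions drop out and the integral decomposes into $-\varepsilon_2\int_0^{\sigma_0}b'\chi(E_0^I-E_0)\,dx$ and $-\varepsilon_2\sum_{i=0}^{N/4-1}\chi'|_{I_i}\Phi_i$, with $\Phi_i:=\int_{I_i}b(E_0^I-E_0)\,dx$. The $b'\chi$-piece is routine: Cauchy--Schwarz together with Lemma \ref{eq:Interpolation-L2} and $\varepsilon_2\leq CN^{-1}$ yields at most $CN^{-3}\Vert\chi\Vert_E$. For each $\Phi_i$ I plan to use the Peano representation of the linear interpolation error together with a Taylor expansion $b(x)=b(x_i)+O(h_i)$ to separate the constant part of $b$, obtaining $|\Phi_i|\leq Ch_i^3\mu_0^2 e^{-p\mu_0 x_i}$ for $i\leq N/4-2$. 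The terminal cell $I_{N/4-1}$ of the fine mesh is isolated and handled directly from $\Vert E_0\Vert_{L^\infty(I_{N/4-1})}\leq CN^{-\tau}$ and the mesh-size bounds of Lemma \ref{lem:mesh size}, contributing at most $CN^{-3}\Vert\chi\Vert_E$ since $\tau\geq 5/2$.

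The crux is then the sum $\sum_{i=0}^{N/4-2}\chi'|_{I_i}\Phi_i$. My plan is to apply Abel summation, trading the piecewise constant $\chi'$ for nodal values $\chi(x_j)$ along with the discrete differences $\Psi_i-\Psi_{i-1}$ of $\Psi_i:=\Phi_i/h_i$. A cell-by-cell expansion (writing $h_i^2 e^{-p\mu_0 x_i}-h_{i-1}^2 e^{-p\mu_0 x_{i-1}}$ as $(h_i+h_{i-1})(h_i-h_{i-1})e^{-p\mu_0 x_i}$ plus an $O(h_i^3)$ correction controlled by Lemma \ref{lem:Error-condition}) reveals that the dominant part of $\Psi_i-\Psi_{i-1}$ is proportional to $\mu_0(h_i-h_{i-1})e^{-p\mu_0 x_i}$. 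Here Lemma \ref{Ti} enters decisively, producing the uniform pointwise bound $|\Psi_i-\Psi_{i-1}|\leq CN^{-2}$, which a pure Cauchy--Schwarz argument cannot furnish. Combining this with a weighted Cauchy--Schwarz on the nodal values, bounded via the inverse-type estimate $|\chi(x_j)|\leq Ch_j^{-1/2}\Vert\chi\Vert_{L^2(I_j)}$ and $\sum_j h_j\chi(x_j)^2\leq C\Vert\chi\Vert^2$, the logarithmic spacing of the Bakhvalov mesh in the layer produces the $\ln^{1/2}N$ factor, while $\varepsilon_2\mu_0\leq C$ converts the leftover $\mu_0^{1/2}$ into $\varepsilon_2^{1/2}$.

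The principal obstacle is precisely this final reduction: closing the missing factor of $N$ in the $\chi'$-sum hinges essentially on the mesh-adapted Lemma \ref{Ti}. Without it, the best available bound is of order $N^{-2}\Vert\chi\Vert_E$, which is insufficient to yield supercloseness when $\varepsilon_2$ is small. The invocation of Lemma \ref{Ti}, which is the paper's advertised ``interesting result'' linking the Bakhvalov mesh geometry to the weak exponential layer, is what finally closes the chain of estimates and produces the claimed bound $|\mathrm{IV}|\leq C(\varepsilon_2^{1/2}N^{-2}\ln^{1/2}N+N^{-3})\Vert\chi\Vert_E$.
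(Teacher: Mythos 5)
Your overall architecture is sound and, despite the different packaging, is essentially the paper's argument: the paper reaches the same critical nodal sum through the Lin-type integral identity, whose boundary terms $\tfrac13(h_i/2)^2E_0''\chi\big|_{x_i}^{x_{i+1}}$ telescope into exactly the $(h_{i+1}^2-h_i^2)E_0''(x_{i+1})\chi(x_{i+1})$ quantities that your Abel summation produces, and Lemma~\ref{Ti} is invoked at the same spot. Your outer-region estimate, the treatment of the terminal fine cell, and the $b'\chi$ remainder are all fine.

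The gap is in your last step. Having identified the dominant part of $\Psi_i-\Psi_{i-1}$ as $C\mu_0^2(h_i+h_{i-1})(h_i-h_{i-1})e^{-p\mu_0x_i}$, you collapse it to the \emph{uniform} bound $|\Psi_i-\Psi_{i-1}|\le CN^{-2}$ by absorbing $(h_i+h_{i-1})\mu_0\le C$, and then claim a weighted Cauchy--Schwarz over the nodal values yields the $\ln^{1/2}N$ factor. It does not: once the local mesh size has been discarded, the only available estimate is
\begin{equation*}
N^{-2}\sum_{i}|\chi(x_i)|\;\le\;N^{-2}\Bigl(\sum_i h_i^{-1}\Bigr)^{1/2}\Bigl(\sum_i h_i\chi(x_i)^2\Bigr)^{1/2}\;\le\;CN^{-2}\bigl(\mu_0N^2\bigr)^{1/2}\Vert\chi\Vert\;=\;C\mu_0^{1/2}N^{-1}\Vert\chi\Vert,
\end{equation*}
since the smallest cells in the layer have $h_i\sim\mu_0^{-1}N^{-1}$, so $\sum_i h_i^{-1}\sim\mu_0N^2$. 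After multiplying by $\varepsilon_2$ and using $\varepsilon_2\mu_0\le C$ this gives only $C\varepsilon_2^{1/2}N^{-1}\Vert\chi\Vert$ --- a full order short of the claim, and no logarithm appears. The fix is to \emph{keep} the factor $h_i+h_{i-1}$, i.e.\ use $|\Psi_i-\Psi_{i-1}|\le C\mu_0(h_i+h_{i-1})N^{-2}$ (plus higher-order corrections controlled by Lemma~\ref{lem:Error-condition}), so that the nodal sum is weighted by the local mesh size:
\begin{equation*}
\mu_0N^{-2}\sum_i (h_i+h_{i-1})|\chi(x_i)|\;\le\;C\mu_0N^{-2}\Bigl(\sum_i h_i\Bigr)^{1/2}\Vert\chi\Vert\;\le\;C\mu_0N^{-2}\cdot\mu_0^{-1/2}\ln^{1/2}N\,\Vert\chi\Vert,
\end{equation*}
where $\sum_i h_i\le x_{\frac{N}{4}-2}\le C\mu_0^{-1}\ln N$ because the sum stops two cells short of the transition point (there $1-\xi iN^{-1}\ge cN^{-1}$). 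This is precisely the paper's $A_4$ computation, and it is the only place the $\ln^{1/2}N$ and the crucial extra $\mu_0^{-1/2}$ can come from; with that correction your argument closes.
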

\begin{proof}
For the sake of analysis, the term $\int_0^1b( E_0^I-E_0)'\chi dx$ is separated into three parts as follows:
\begin{align*}
\int_0^1b( E_0^I-E_0)'\chi dx
&=\int_{x_0}^{x_{\frac{N}{4}-1}}b(E_0^I-E_0)'\chi dx
+\int_{x_{\frac{N}{4}-1}}^{x_{\frac{N}{4}}}b(E_0^I-E_0)'\chi dx\\
&+\int_{x_{\frac{N}{4}}}^{x_N}b(E_0^I-E_0)'\chi dx\\
&=:\Delta_1+\Delta_2+\Delta_3.
\end{align*} 
For $\Delta_1$,  we first use the integral identity (see \cite[(5.14)]{Linb:2010-Layer}) to get
\begin{equation}\label{eq:integral identity}
\begin{aligned}
\int_{I_i}(E_0-E_0^I)'\chi dx&=\frac{1}{6}\int_{I_i}E_0'''(J_i^2)'\chi' dx -\frac{1}{3}\left(\frac{h_i}{2}\right)^2\int_{I_i}E_0'''\chi dx \\
&+\frac{1}{3}\left(\frac{h_i}{2}\right)^2E_0''\chi\vert_{x_{i}}^{x_{i+1}},
\end{aligned}
\end{equation}
where $J_i(x)=\frac{1}{2}(x-x_{i})(x-x_{i+1}).$
Then from \eqref{eq:integral identity} and H\"{o}lder inequalities we can obtain
\begin{equation*}
\begin{aligned}
\vert\Delta_1\vert&\leq C\sum_{i=0}^{\frac{N}{4}-2}h_{i}^3\Vert E_0'''\Vert_{I_i}\Vert\chi'\Vert_{I_i}
+C\sum_{i=0}^{\frac{N}{4}-2}h_{i}^2\Vert E_0'''\Vert_{I_i}\Vert\chi\Vert_{I_i}\\
&+Ch_0^2\vert E_0''(0)\vert\vert\chi(0)\vert+C\sum_{i=0}^{\frac{N}{4}-3}(h_{i+1}-h_i)(h_{i+1}+h_i)\vert E_0''(x_{i+1})\vert\vert\chi(x_{i+1})\vert\\
&+Ch_{\frac{N}{4}-2}^2\vert E_0''(x_{\frac{N}{4}-1})\vert\vert\chi(x_{\frac{N}{4}-1})\vert\\
&=:A_1+A_2+A_3+A_4+A_5.
\end{aligned}
\end{equation*}
Using  the inverse inequality, \eqref{lem1:cof-1} and \eqref{lem:e-E0} with $m=\frac{5}{2}$ to obtain
\begin{equation}\label{eq:A1}
\begin{aligned}
A_1&\leq C\sum_{i=0}^{\frac{N}{4}-2}h_i^3h_i^{-\frac{1}{2}}\Vert E_0'''\Vert_{L^{\infty}(I_i)}\Vert\chi\Vert_{I_i}
\leq C\sum_{i=0}^{\frac{N}{4}-2}h_i^{\frac{5}{2}}\mu_0^3e^{-p\mu_0x_i}\Vert\chi\Vert_{I_i}\\
&\leq C\sum_{i=0}^{\frac{N}{4}-2}\mu_0^{\frac{1}{2}}N^{-\frac{5}{2}}\Vert\chi\Vert_{I_i}
\leq C\mu_0^{\frac{1}{2}}\left(\sum_{i=0}^{\frac{N}{4}-2}N^{-5}\right)^{\frac{1}{2}}\left(\sum_{i=0}^{\frac{N}{4}-2}\Vert\chi\Vert_{I_i}^2\right)^{\frac{1}{2}}\\
&\leq C\mu_0^{\frac{1}{2}}N^{-2}\Vert\chi\Vert_{[x_0,x_{\frac{N}{4}-1}]}.
\end{aligned}
\end{equation}
And to use \eqref{lem1:cof-1} and \eqref{lem:e-E0} with $m=\frac{5}{2}$ again to get 
\begin{equation}\label{eq:A2}
\begin{aligned}
A_2&\leq C\sum_{i=0}^{\frac{N}{4}-2}h_i^2h_i^{\frac{1}{2}}\Vert E_0'''\Vert_{L^{\infty}(I_i)}\Vert\chi\Vert_{I_i}
\leq C\sum_{i=0}^{\frac{N}{4}-2}h_i^{\frac{5}{2}}\mu_0^3e^{-p\mu_0x_i}\Vert\chi\Vert_{I_i}\\
&\leq C\sum_{i=0}^{\frac{N}{4}-2}\mu_0^{\frac{1}{2}}N^{-\frac{5}{2}}\Vert\chi\Vert_{I_i}
\leq C\mu_0^{\frac{1}{2}}\left(\sum_{i=0}^{\frac{N}{4}-2}N^{-5}\right)^{\frac{1}{2}}\left(\sum_{i=0}^{\frac{N}{4}-2}\Vert\chi\Vert_{I_i}^2\right)^{\frac{1}{2}}\\
&\leq C\mu_0^{\frac{1}{2}}N^{-2}\Vert\chi\Vert_{[x_0,x_{\frac{N}{4}-1}]}.
\end{aligned}
\end{equation}
Obviously we have $\chi\in V^N$, so one has
\begin{equation}\label{eq:A3}
A_3=0.
\end{equation}
From \eqref{lem1:cof-1}, Lemmas \ref{lem:mesh size}, \ref{Ti}, and the inverse inequality  we have
\begin{equation}\label{eq:A4}
\begin{aligned}
A_4&\leq C\sum_{i=0}^{\frac{N}{4}-3}(h_{i+1}-h_i)(h_{i+1}+h_i)\mu_0^2e^{-p\mu_0x_{i+1}}\vert\chi(x_{i+1})\vert\\
&\leq C\mu_0N^{-2}\sum_{i=0}^{\frac{N}{4}-3}(h_{i+1}+h_i)\vert\chi(x_{i+1})\vert
\leq C\mu_0N^{-2}\sum_{i=0}^{\frac{N}{4}-3}h_{i+1}\Vert\chi\Vert_{L^{\infty}(I_{i+1})}\\
&\leq C\mu_0N^{-2}\sum_{i=0}^{\frac{N}{4}-3}h_{i+1}h_{i+1}^{-\frac{1}{2}}\Vert\chi\Vert_{I_{i+1}}\\
&\leq C\mu_0N^{-2}\left(\sum_{i=0}^{\frac{N}{4}-3}h_{i+1}\right)^{\frac{1}{2}}\left(\sum_{i=0}^{\frac{N}{4}-3}\Vert\chi\Vert_{I_{i+1}}^2\right)^{\frac{1}{2}}\\
&\leq C\mu_0N^{-2}\mu_0^{-\frac{1}{2}}\ln^{\frac{1}{2}}N\Vert\chi\Vert_{[x_0,x_{\frac{N}{4}-1}]}\\
&\leq C\mu_0^{\frac{1}{2}}N^{-2}\ln^{\frac{1}{2}}N\Vert\chi\Vert_{[x_0,x_{\frac{N}{4}-1}]}.
\end{aligned}
\end{equation}
From \eqref{lem1:cof-1}, Lemma \ref{lem:mesh size} and the inverse inequality we can obtain
\begin{equation}\label{eq:A5}
\begin{aligned}
A_5&\leq Ch_{\frac{N}{4}-2}^2\mu_0^2e^{-p\mu_0x_{\frac{N}{4}-1}}\vert\chi(x_{\frac{N}{4}-1})\vert
\leq CN^{-\tau}\Vert\chi\Vert_{L^{\infty}(I_{\frac{N}{4}-2})}\\
&\leq CN^{-\frac{5}{2}}h_{\frac{N}{4}-2}^{-\frac{1}{2}}\Vert\chi\Vert_{I_{\frac{N}{4}-2}}\\
&\leq C\mu_0^{\frac{1}{2}}N^{-\frac{5}{2}}\Vert\chi\Vert_{I_{\frac{N}{4}-2}}.
\end{aligned}
\end{equation}
Thus from \eqref{eq:A1}--\eqref{eq:A5} we have 
\begin{equation}\label{eq:Delta1}
\vert\Delta_1\vert\leq C\mu_0^{\frac{1}{2}}N^{-2}\ln^{\frac{1}{2}}N\Vert\chi\Vert_{E,[x_0,x_{\frac{N}{4}-1}]}.
\end{equation}
 For $\Delta_2$, using H\"{o}lder inequalities, the triangle inequality, the inverse inequality, Lemma \ref{lem:mesh size} and \eqref{lem1:cof-1}  yield
\begin{equation}\label{eq:Delta2}
\begin{aligned}
\Delta_2&\leq C\Vert(E_0^I)'\Vert_{I_{\frac{N}{4}-1}}\Vert\chi\Vert_{I_{\frac{N}{4}-1}}+C\Vert E_0'\Vert_{I_{\frac{N}{4}-1}}\Vert\chi\Vert_{I_{\frac{N}{4}-1}}\\
&\leq Ch_{\frac{N}{4}-1}^{-\frac{1}{2}}\Vert E_0^I\Vert_{L^{\infty}(I_{\frac{N}{4}-1})}\Vert\chi\Vert_{I_{\frac{N}{4}-1}}
+Ch_{\frac{N}{4}-1}^{\frac{1}{2}}\Vert E_0'\Vert_{L^{\infty}(I_{\frac{N}{4}-1})}\Vert\chi\Vert_{I_{\frac{N}{4}-1}}\\
&\leq C\mu_0^{\frac{1}{2}}e^{-p\mu_0x_{\frac{N}{4}-1}}\Vert\chi\Vert_{I_{\frac{N}{4}-1}}
+CN^{-\frac{1}{2}}\mu_0e^{-p\mu_0x_{\frac{N}{4}-1}}\Vert\chi\Vert_{I_{\frac{N}{4}-1}}\\
&\leq C\mu_0^{\frac{1}{2}}N^{-\tau}\Vert\chi\Vert_{I_{\frac{N}{4}-1}}+C\mu_0N^{-(\frac{1}{2}+\tau)}\Vert\chi\Vert_{I_{\frac{N}{4}-1}}\\
&\leq C\mu_0^{\frac{1}{2}}N^{-\frac{5}{2}}\Vert\chi\Vert_{I_{\frac{N}{4}-1}}+C\mu_0N^{-3}\Vert\chi\Vert_{I_{\frac{N}{4}-1}}.
\end{aligned}
\end{equation}
For $\Delta_3$, using H\"{o}lder inequalities, \eqref{eq:Interpolation-error},  \eqref{lem1:cof-1} and Lemma \ref{lem:mesh size}  to obtain
\begin{equation}\label{eq:Delta3}
\begin{aligned}
\Delta_3&\leq C\sum_{i=\frac{N}{4}}^{N-1}\Vert(E_0^I-E_0)'\Vert_{I_i}\Vert\chi\Vert_{I_i}
\leq C\sum_{i=\frac{N}{4}}^{N-1}h_i^{\frac{3}{2}}\Vert E_0''\Vert_{L^{\infty}(I_i)}\Vert\chi\Vert_{I_i}\\
&\leq C\sum_{i=\frac{N}{4}}^{N-1}h_i^{\frac{3}{2}}\mu_0^2e^{-p\mu_0x_{\frac{N}{4}}}\Vert\chi\Vert_{I_i} 
\leq C\sum_{i=\frac{N}{4}}^{N-1}h_i^{\frac{3}{2}}\mu_0^2\mu_0^{-\tau}\Vert\chi\Vert_{I_i}\\ 
&\leq C\sum_{i=\frac{N}{4}}^{N-1}N^{-\frac{3}{2}}\mu_0^{-\frac{1}{2}}\Vert\chi\Vert_{I_i}
\leq C\mu_0^{-\frac{1}{2}}\left(\sum_{i=\frac{N}{4}}^{N-1}N^{-3}\right)^{\frac{1}{2}}\left(\sum_{i=\frac{N}{4}}^{N-1}\Vert\chi\Vert_{I_i}^2\right)^{\frac{1}{2}}\\
&\leq C\mu_0^{-\frac{1}{2}}N^{-1}\Vert\chi\Vert_{[x_{\frac{N}{4}},x_N]}
\end{aligned}
\end{equation}

Therefore from \eqref{eq:Delta1}--\eqref{eq:Delta3} and \eqref{eq:N to solutions}  we have
\begin{equation*}
\vert\int_0^1b( E_0^I-E_0)'\chi dx\vert\leq C(\mu_0^{\frac{1}{2}}N^{-2}\ln^{\frac{1}{2}}N+\mu_0N^{-3})\Vert\chi\Vert.
\end{equation*}
Combine that with \eqref{cos:conclusions-2} we get 
\begin{equation*}\label{eq:IV-B}
\vert\varepsilon_2\int_0^1b( E_0^I-E_0)'\chi dx\vert\leq C(\varepsilon_2^{\frac{1}{2}}N^{-2}\ln^{\frac{1}{2}}N+N^{-3})\Vert\chi\Vert_E.
\end{equation*}
\end{proof}


\begin{lemma}\label{eq:V-A}
Assume $\tau\geq \frac{5}{2}$ 
 and let $\pi E_1$ be defined in \eqref{eq:Interpolation-E1}. On the Bakhvalov-type  mesh \eqref{eq:mesh-points},   one has
\begin{equation*}\label{eq:V-B}
\vert\uppercase\expandafter{\romannumeral7}\vert
\leq C\varepsilon_2^{\frac{1}{2}}N^{-2}\Vert\chi\Vert _E.
\end{equation*}
\end{lemma}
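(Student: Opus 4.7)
I take the statement literally and aim to bound $\uppercase\expandafter{\romannumeral7}=\int_0^1 c(u^I-u)\chi\,dx$ by $C\varepsilon_2^{1/2}N^{-2}\Vert\chi\Vert_E$. The plan is to split $u^I-u$ via the solution decomposition $u=S+E_0+E_1$ of \eqref{eq:u-decomposition}, yielding
\begin{equation*}
\uppercase\expandafter{\romannumeral7}=\int_0^1 c(S-S^I)\chi\,dx+\sum_{j=0,1}\int_0^1 c(E_j-E_j^I)\chi\,dx,
\end{equation*}
and to bound each piece by H\"{o}lder's inequality together with refined $L^2$-interpolation estimates.

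For the two boundary-layer pieces, I would re-derive the $L^2$-bound of Lemma \ref{eq:Interpolation-L2} in the sharper form $\Vert E_j-E_j^I\Vert\leq C\mu_j^{-1/2}N^{-2}+CN^{-5/2}$, obtained by applying Lemma \ref{lem:Error-condition} at $m=3/2$ on the fine-mesh subintervals (so that $\sum h_i^4\Vert E_j''\Vert_{I_i}^2\leq C\mu_j^{-1}N^{-4}$) and by using the crude bound $|E_j|\leq Ce^{-p\mu_j\sigma_j}\leq CN^{-\tau}$ together with Lemma \ref{lem:mesh size} on the coarse region. H\"{o}lder then gives $\vert\int_0^1 c(E_j-E_j^I)\chi\,dx\vert\leq C(\mu_j^{-1/2}N^{-2}+N^{-5/2})\Vert\chi\Vert$, after which the relation $\mu_j^{-1/2}\leq C(\varepsilon_2+\varepsilon_1^{1/2})^{1/2}$ from \eqref{cos:conclusions-1} absorbs the layer contribution into $C\varepsilon_2^{1/2}N^{-2}\Vert\chi\Vert_E$ plus a subdominant $CN^{-5/2}\Vert\chi\Vert_E$ term.

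The main obstacle is the regular piece $\int_0^1 c(S-S^I)\chi\,dx$: since $S$ enjoys no $\mu_j^{-1/2}$-gain, the direct bound $\leq\Vert c\Vert_{L^\infty}\Vert S-S^I\Vert\cdot\Vert\chi\Vert\leq CN^{-2}\Vert\chi\Vert_E$ from Lemma \ref{eq:Interpolation-L2} is the best available and fails to produce the $\varepsilon_2^{1/2}$. Two resolutions suggest themselves: either (i) accept the slightly weaker bound $\vert\uppercase\expandafter{\romannumeral7}\vert\leq CN^{-2}\Vert\chi\Vert_E$, which is still consistent with the overall supercloseness rate and is already subsumed in the grouped estimate \eqref{eq:II+VI+VII+VIII}; or (ii) observe that the label \ref{eq:V-A} and the hypothesis ``let $\pi E_1$ be defined in \eqref{eq:Interpolation-E1}'' point instead to term $\uppercase\expandafter{\romannumeral5}=-\varepsilon_2\int_0^1 b(\pi E_1-E_1)\chi'\,dx$, for which the sought $\varepsilon_2^{1/2}$ arises naturally from $\varepsilon_2(\varepsilon_1\mu_1)^{-1/2}\leq C\varepsilon_2^{1/2}$ in \eqref{cos:conclusions-2} applied to $\varepsilon_2\Vert\pi E_1-E_1\Vert\cdot|\chi|_1$, combined with $\Vert\pi E_1-E_1\Vert\leq C\mu_1^{-1/2}N^{-2}$ (refined as above) and $|\chi|_1\leq\varepsilon_1^{-1/2}\Vert\chi\Vert_E$.
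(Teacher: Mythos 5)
Your reading of the statement is the right one: the ``VII'' is a mislabeling. The paper's own proof of this lemma estimates $\int_0^1 b(\pi E_1-E_1)\chi'\,dx$, i.e.\ term V of \eqref{eq:uniform-convergence-1}, exactly as the hypothesis on $\pi E_1$, the lemma's label, and the preceding sentence (``estimates of IV and V'') indicate; term VII is already disposed of inside \eqref{eq:II+VI+VII+VIII} with the bound $CN^{-2}\Vert\chi\Vert_E$, and your observation that the regular component $S-S^I$ makes a factor $\varepsilon_2^{1/2}$ unattainable for VII is correct. Your option (ii) also identifies the paper's mechanism for V: pull out $\vert\chi\vert_1\le\varepsilon_1^{-1/2}\Vert\chi\Vert_E$ and convert $\varepsilon_2(\varepsilon_1\mu_1)^{-1/2}\le C\varepsilon_2^{1/2}$ via \eqref{cos:conclusions-2}.

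The gap is in the $L^2$ bound you feed into that mechanism. For the conversion to work, \emph{every} contribution to $\Vert\pi E_1-E_1\Vert$ must carry a factor $\mu_1^{-1/2}$; the remainder $CN^{-5/2}$ in your refined estimate $\Vert E_1-E_1^I\Vert\le C\mu_1^{-1/2}N^{-2}+CN^{-5/2}$ is fatal, since $\varepsilon_2\varepsilon_1^{-1/2}N^{-5/2}\le C\varepsilon_2^{1/2}N^{-2}$ would require $\varepsilon_2^{1/2}\varepsilon_1^{-1/2}\le CN^{1/2}$, whereas \eqref{cos:conclusions-2} only yields $\varepsilon_2^{1/2}\varepsilon_1^{-1/2}\le C\mu_1^{1/2}$ and $\mu_1$ can greatly exceed $N$ under \eqref{eq:N to solutions} (e.g.\ $\varepsilon_2=N^{-1}$, $\varepsilon_1=N^{-10}$ gives $\mu_1\sim N^{9}$). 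The $N^{-5/2}$ comes from your coarse-region bound $\vert E_1\vert\le Ce^{-p\mu_1\sigma_1}\le CN^{-\tau}$; you must instead retain $e^{-p\mu_1\sigma_1}=\mu_1^{-\tau}$, which for $\tau\ge\frac{5}{2}$ gives $\Vert E_1''\Vert_{L^\infty}\le C\mu_1^{2-\tau}\le C\mu_1^{-1/2}$ away from the layer, and on the two exceptional cells $I_{3N/4}$ and $I_{3N/4+1}$ (where $\pi E_1\ne E_1^I$ and $h_{3N/4}$ may be of order $N^{-1}$) you need $\Vert E_1\Vert_{I_{3N/4}}^2\le C\int e^{-2p\mu_1(1-x)}\,dx\le C\mu_1^{-1}N^{-2\tau}$ rather than the cruder $h_{3N/4}\Vert E_1\Vert_{L^\infty}^2$. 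Extracting that $\mu_1^{-1/2}$ from each piece is exactly what the paper's decomposition into $\Gamma_1$ (up to the transition point), $\Gamma_2$ (the two exceptional cells) and $\Gamma_3$ (the fine cells near $x=1$) accomplishes; as sketched, your estimate does not close.
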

\begin{proof}
According to \eqref{eq:Interpolation-PE1-E1},  the term $\int_0^1b(\pi E_1-E_1)\chi'dx$ is separated into three parts as follows:
$$\begin{aligned}
&\int_0^1b(\pi E_1-E_1)\chi'dx\\
&=\int_{x_0}^{x_{\frac{3N}{4}}}b(E_1^I-E_1)\chi'dx
+\int_{x_{\frac{3N}{4}}}^{x_{\frac{3N}{4}+2}}b(\pi E_1- E_1)\chi'dx
+\int_{x_{\frac{3N}{4}+2}}^{x_N}b(E_1^I-E_1)\chi'dx\\
&=\Gamma_1+\Gamma_2+\Gamma_3.
\end{aligned}\\$$
For $\Gamma_1$, from  H\"{o}lder inequalities, \eqref{eq:Interpolation-error}, \eqref{lem1:cof-1} and Lemma \ref{lem:mesh size}, we have
\begin{equation}\label{eq:VII-1}
\begin{aligned}
\vert\Gamma_1\vert &
\leq C\sum_{i=0}^{\frac{3N}{4}-1}\Vert E_0^I-E_0\Vert_{I_i}\Vert \chi'\Vert_{I_i}
\leq C\sum_{i=0}^{\frac{3N}{4}-1}h_i^{\frac{5}{2}}\Vert E_1''\Vert_{L^{\infty}(I_i)}\Vert \chi'\Vert_{I_i}\\
&\leq C\sum_{i=0}^{\frac{3N}{4}-1}h_i^{\frac{5}{2}}\mu_1^2e^{-p\mu_1(1-x_{\frac{N}{4}})}\Vert \chi'\Vert_{I_i}
\leq C\sum_{i=0}^{\frac{3N}{4}-1}N^{-\frac{5}{2}}\mu_1^2\mu_1^{-\tau}\Vert \chi'\Vert_{I_i}\\
&\leq C\sum_{i=0}^{\frac{3N}{4}-1}N^{-\frac{5}{2}}\mu_1^{-\frac{1}{2}}\Vert \chi'\Vert_{I_i}
\leq C\mu_1^{-\frac{1}{2}}\left(\sum_{i=0}^{\frac{3N}{4}-1}N^{-5}\right)^{\frac{1}{2}}
\left(\sum_{i=0}^{\frac{3N}{4}-1}\Vert \chi'\Vert_{I_i}^2\right)^{\frac{1}{2}}\\
&\leq C\mu_1^{-\frac{1}{2}}N^{-2}\Vert \chi'\Vert_{[x_0, x_{\frac{3N}{4}}]}\\
&\leq C\varepsilon_1^{-\frac{1}{2}}\mu_1^{-\frac{1}{2}}N^{-2}\Vert \chi\Vert_{E, [x_0, x_{\frac{3N}{4}}]}.
\end{aligned}
\end{equation}
Now we analyze the term $\Gamma_2$. Note $\pi E_1=E_1^I-(PE_1)(x)$ on $[x_{\frac{3N}{4}+1}, x_{\frac{3N}{4}+2}]$ and
from H\"{o}lder inequalities, \eqref{eq:Interpolation-error}, \eqref{lem1:cof-1}, \eqref{lem:e-E1} with $m=\frac{5}{2}$ and Lemma \ref{lem:mesh size},  
one has
\begin{equation}\label{eq:VII-2-1}
\begin{aligned}
&\vert\int_{x_{\frac{3N}{4}+1}}^{x_{\frac{3N}{4}+2}}b(\pi E_1-E_1)\chi'dx\vert
\leq C\int_{x_{\frac{3N}{4}+1}}^{x_{\frac{3N}{4}+2}}\vert E_1^I-E_1\vert\vert\chi'\vert dx\\
&+C \vert E_1(x_{\frac{3N}{4}+1})\vert\int_{x_{\frac{3N}{4}+1}}^{x_{\frac{3N}{4}+2}}\vert\theta_{\frac{3N}{4}+1}(x)\chi'\vert dx\\
&\leq C(\Vert E_1^I-E_1\Vert_{I_{\frac{3N}{4}+1}}
+\vert E_1(x_{\frac{3N}{4}+1})\vert h_{\frac{3N}{4}+1}^{\frac{1}{2}})\Vert \chi'\Vert_{I_{\frac{3N}{4}+1}}\\
&\leq C(h_{\frac{3N}{4}+1}^{\frac{5}{2}}\Vert E_1''\Vert_{L^{\infty}(I_{\frac{3N}{4}+1})}
+N^{-\tau}\mu_1^{-\frac{1}{2}})\Vert \chi'\Vert_{I_{\frac{3N}{4}+1}}\\
&\leq C(h_{\frac{3N}{4}+1}^{\frac{5}{2}}\mu_1^2e^{-p\mu_1(1-x_{\frac{3N}{4}+2})}
+N^{-\tau}\mu_1^{-\frac{1}{2}})\Vert \chi'\Vert_{I_{\frac{3N}{4}+1}}\\
&\leq C(\mu_1^{-\frac{1}{2}}N^{-\frac{5}{2}}
+N^{-\frac{5}{2}}\mu_1^{-\frac{1}{2}})\Vert \chi'\Vert_{I_{\frac{3N}{4}+1}}\\
&\leq C\varepsilon_1^{-\frac{1}{2}}\mu_1^{-\frac{1}{2}}N^{-\frac{5}{2}}\Vert \chi\Vert_{E, I_{\frac{3N}{4}+1}}.
\end{aligned}
\end{equation}
On $[x_{\frac{3N}{4}}, x_{\frac{3N}{4}+1}]$, we have $\pi E_1=E_1(x_{\frac{3N}{4}})\theta_{\frac{3N}{4}}(x)$ from \eqref{eq:Interpolation-E1}. Thus
from H\"{o}lder inequalities,  \eqref{lem1:cof-1} and Lemma \ref{lem:mesh size}, we have
\begin{equation}\label{eq:VII-2-2}
\begin{aligned}
&\vert\int_{x_{\frac{3N}{4}}}^{x_{\frac{3N}{4}+1}}b(\pi E_1-E_1)\chi'dx\vert\\
&\leq C\vert E_1(x_{\frac{3N}{4}})\vert\int_{x_{\frac{3N}{4}}}^{x_{\frac{3N}{4}+1}}\vert\theta_{\frac{3N}{4}}(x)\vert\vert\chi'\vert dx
+C \int_{x_{\frac{3N}{4}}}^{x_{\frac{3N}{4}+1}}\vert E_1\vert\vert\chi'\vert dx\\
&\leq C(\vert E_1(x_{\frac{3N}{4}})\vert\Vert\theta_{\frac{3N}{4}}(x)\Vert_{I_{\frac{3N}{4}}}
+\Vert E_1\Vert_{I_{\frac{3N}{4}}})\Vert\chi'\Vert_{I_{\frac{3N}{4}}}\\
&\leq C(\mu_1^{-\tau}h_{\frac{3N}{4}}^{\frac{1}{2}}+\mu_1^{-\frac{1}{2}}N^{-\tau})\Vert\chi'\Vert_{I_{\frac{3N}{4}}}\\
&\leq C(\mu_1^{-\frac{5}{2}}N^{-\frac{1}{2}}+\mu_1^{-\frac{1}{2}}N^{-\frac{5}{2}})\Vert\chi'\Vert_{I_{\frac{3N}{4}}}\\
&\leq C\varepsilon_1^{-\frac{1}{2}}(\mu_1^{-\frac{5}{2}}N^{-\frac{1}{2}}+\mu_1^{-\frac{1}{2}}N^{-\frac{5}{2}})\Vert\chi\Vert_{E, I_{\frac{3N}{4}}}.
\end{aligned}
\end{equation}
Therefore, from \eqref{eq:VII-2-1} and \eqref{eq:VII-2-2} we can obtain
\begin{equation}\label{eq:VII-2}
\vert\Gamma_2\vert\leq C\varepsilon_1^{-\frac{1}{2}}(\mu_1^{-\frac{5}{2}}N^{-\frac{1}{2}}+\mu_1^{-\frac{1}{2}}N^{-\frac{5}{2}})\Vert\chi\Vert_{E, [x_{\frac{3N}{4}}, x_{\frac{3N}{4}+2}]}.
\end{equation}
For $\Gamma_3$, from H\"{o}lder inequalities, \eqref{eq:Interpolation-error},  \eqref{lem:e-E1} with $m=\frac{5}{2}$, we have
\begin{equation}\label{eq:VII-3}
\begin{aligned}
\vert \Gamma_3\vert&
\leq C\sum_{i=\frac{3N}{4}+2}^{N-1}\Vert E_1^I-E_1\Vert_{I_i}\Vert \chi'\Vert_{I_i}
\leq C\sum_{i=\frac{3N}{4}+2}^{N-1}h_i^{\frac{5}{2}}\Vert E_1''\Vert_{L^{\infty}(I_i)}\Vert \chi'\Vert_{I_i}\\
&\leq C\sum_{i=\frac{3N}{4}+2}^{N-1}h_i^{\frac{5}{2}}\mu_1^2e^{-p\mu_1(1-x_{i+1})}\Vert \chi'\Vert_{I_i}
\leq C\sum_{i=\frac{3N}{4}+2}^{N-1}\mu_1^{-\frac{1}{2}}N^{-\frac{5}{2}}\Vert \chi'\Vert_{I_i}\\
&\leq C\mu_1^{-\frac{1}{2}}\left(\sum_{i=\frac{3N}{4}+2}^{N-1}N^{-5}\right)^{\frac{1}{2}}
\left(\sum_{i=\frac{3N}{4}+2}^{N-1}\Vert \chi'\Vert_{I_i}^2\right)^{\frac{1}{2}}\\
&\leq C\mu_1^{-\frac{1}{2}}N^{-2}\Vert \chi'\Vert_{[x_{\frac{3N}{4}+2}, x_N]}\\
&\leq C\varepsilon_1^{-\frac{1}{2}}\mu_1^{-\frac{1}{2}}N^{-2}\Vert \chi\Vert_{E, [x_{\frac{3N}{4}+2}, x_N]}.
\end{aligned}
\end{equation}

From \eqref{eq:VII-1},  \eqref{eq:VII-2},  \eqref{eq:VII-3}, \eqref{eq:N to solutions} and  \eqref{cos:conclusions-2}  we are done.
\end{proof}


Now we are in a position to present our main results.
\begin{theorem}\label{the:u^I-u^N}
Assume $\tau\geq \frac{5}{2}$. Let the mesh $\{x_i\}$ be the Bakhvalov-type mesh \eqref{eq:mesh-points}. Then we have
\begin{align*}
&\Vert u^I-u^N\Vert_E\leq C\varepsilon_2^{\frac{1}{2}}N^{-2}\ln^{\frac{1}{2}}N+CN^{-2}.
\end{align*}
\end{theorem}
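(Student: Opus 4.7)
My plan is to start from the coercivity bound $\alpha\|\chi\|_E^2 \leq a(\chi,\chi)$ with $\chi := \Pi u - u^N \in V^N$; Galerkin orthogonality converts the right-hand side into $a(\Pi u - u,\chi)$, and the decomposition $u = S + E_0 + E_1$ together with $\Pi u = u^I - (PE_1)(x)$ from \eqref{eq:Interpolation-PEj-Ej} splits this into the eight pieces I--VIII written out in \eqref{eq:uniform-convergence-1}. All of the hard work of bounding these pieces has already been carried out above: integration by parts gives piece I $= 0$ in \eqref{eq:I}, the collective estimate \eqref{eq:II+VI+VII+VIII} handles four of the remaining seven by $CN^{-2}\|\chi\|_E$, \eqref{eq:III} handles III by $C\varepsilon_2^{1/2}N^{-2}\ln^{1/2}N\|\chi\|_E$, and Lemmas~\ref{eq:IV-A} and~\ref{eq:V-A} supply the bounds for IV and V respectively.

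Summing these estimates, each of the eight pieces contributes a prefactor of the form $CN^{-2}$, $C\varepsilon_2^{1/2}N^{-2}\ln^{1/2}N$, $CN^{-3}$, or $C\varepsilon_2^{1/2}N^{-2}$, all multiplied by $\|\chi\|_E$. Since $N^{-3} \leq N^{-2}$ and $\varepsilon_2^{1/2}N^{-2} \leq \varepsilon_2^{1/2}N^{-2}\ln^{1/2}N$, this yields
\begin{equation*}
\alpha\|\chi\|_E^2 \leq C\bigl(\varepsilon_2^{1/2}N^{-2}\ln^{1/2}N + N^{-2}\bigr)\|\chi\|_E,
\end{equation*}
and dividing through by $\|\chi\|_E$ gives $\|\chi\|_E \leq C\varepsilon_2^{1/2}N^{-2}\ln^{1/2}N + CN^{-2}$.

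To pass from $\chi$ to $u^I - u^N$, I would use \eqref{eq:Interpolation-PEj-Ej} to write $u^I - u^N = (\Pi u - u^N) + (PE_1)(x) = \chi + (PE_1)(x)$, apply the triangle inequality in the energy norm, and invoke Lemma~\ref{eq:PE-norm}:
\begin{equation*}
\|u^I - u^N\|_E \leq \|\chi\|_E + \|(PE_1)(x)\|_E \leq C\varepsilon_2^{1/2}N^{-2}\ln^{1/2}N + CN^{-2} + CN^{-5/2}(\varepsilon_2 + \varepsilon_1^{1/2})^{1/2} + CN^{-3}.
\end{equation*}
Since $\varepsilon_1,\varepsilon_2 \in (0,1)$, the factor $(\varepsilon_2 + \varepsilon_1^{1/2})^{1/2}$ is bounded by a constant, so both the $N^{-5/2}$ and $N^{-3}$ pieces are absorbed into $CN^{-2}$, producing the claimed bound.

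The substantive obstacles have all been resolved in the preceding lemmas rather than in the theorem itself. In particular, the critical point was the bound on term IV, where the jump quantities $(h_{i+1}-h_i)e^{-p\mu_0 x_{i+1}}$ produced by the standard integral identity on the graded subinterval had to be controlled uniformly; this is exactly the role of the monotonicity argument in Lemma~\ref{Ti}. A secondary delicate point, already internalized in Lemma~\ref{eq:V-A}, is the compensation of the spurious $\varepsilon_1^{-1/2}$ factors by $\varepsilon_2(\varepsilon_1\mu_1)^{-1/2} \leq C\varepsilon_2^{1/2}$ from \eqref{cos:conclusions-2}. At the level of Theorem~\ref{the:u^I-u^N}, the work reduces to collecting these estimates and identifying the dominant prefactor.
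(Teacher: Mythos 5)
Your proposal is correct and follows essentially the same route as the paper: the bound $\Vert\chi\Vert_E\leq C\varepsilon_2^{1/2}N^{-2}\ln^{1/2}N+CN^{-2}$ is assembled from \eqref{eq:I}, \eqref{eq:II+VI+VII+VIII}, \eqref{eq:III} and Lemmas~\ref{eq:IV-A}--\ref{eq:V-A}, and then $\Vert u^I-u^N\Vert_E\leq C\Vert\chi\Vert_E+C\Vert PE_1\Vert_E$ together with Lemma~\ref{eq:PE-norm} gives the claim. Your accounting of which displayed estimate covers which of the eight terms, and your observation that the $N^{-5/2}(\varepsilon_2+\varepsilon_1^{1/2})^{1/2}$ and $N^{-3}$ contributions are absorbed into $CN^{-2}$, match the paper's argument exactly.
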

\begin{proof}
From  \eqref{eq:I}, \eqref{eq:II+VI+VII+VIII}, \eqref{eq:III} and  Lemmas \ref{eq:IV-A}--\ref{eq:V-A},   we can easily obtain 
\begin{equation}\label{eq:chi}
\Vert\chi\Vert_E\leq C\varepsilon_2^{\frac{1}{2}}N^{-2}\ln^{\frac{1}{2}}N+CN^{-2}.
\end{equation}
Recalling that $\Vert u^I-u^N\Vert_E\leq C\Vert\chi\Vert_E+C\Vert PE_1\Vert_E$, then our conclusion can be drawn from \eqref{eq:chi} and Lemma \ref{eq:PE-norm}.
\end{proof}

Same to Theorem 4 in \cite{Brda1Zari2:2016-singularly},  we prove the  optimal order of uniform convergence in the following theorem. 
\begin{theorem}\label{the:u-u^N}
Assume $\tau\geq \frac{5}{2}$. Let the mesh $\{x_i\}$ be the Bakhvalov-type mesh \eqref{eq:mesh-points}. Then we have
\begin{equation*}\label{eq:u-u^N}
\Vert u-u^N\Vert _E\leq
C(\varepsilon_2+\varepsilon_1^{\frac{1}{2}})^{\frac{1}{2}}N^{-1}+CN^{-2}.
\end{equation*}
\end{theorem}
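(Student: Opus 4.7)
The plan is to reduce the claim to the two theorems already in hand by a single triangle inequality. Specifically, I would write
\[
\Vert u - u^N\Vert_E \leq \Vert u - u^I\Vert_E + \Vert u^I - u^N\Vert_E,
\]
and then invoke Theorem \ref{eq:Interpolation-energy-norm} for the interpolation error and Theorem \ref{the:u^I-u^N} for the supercloseness between the Lagrange interpolant and the finite element solution. This yields
\[
\Vert u - u^N\Vert_E \leq C(\varepsilon_2 + \varepsilon_1^{1/2})^{1/2} N^{-1} + C\varepsilon_2^{1/2} N^{-2} \ln^{1/2} N + CN^{-2}.
\]

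The only remaining point is to verify that the middle term is absorbed into the first and third. Since $N \geq 16$, we have $\ln^{1/2} N / N \leq C$, so $\varepsilon_2^{1/2} N^{-2} \ln^{1/2} N \leq C\varepsilon_2^{1/2} N^{-1}$, and of course $\varepsilon_2^{1/2} \leq (\varepsilon_2 + \varepsilon_1^{1/2})^{1/2}$. Hence
\[
C\varepsilon_2^{1/2} N^{-2} \ln^{1/2} N \leq C(\varepsilon_2 + \varepsilon_1^{1/2})^{1/2} N^{-1},
\]
and combining the bounds gives exactly the target estimate.

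There is essentially no obstacle here; the work was all done in the previous two theorems. The only thing to be careful about is the bookkeeping to confirm that the supercloseness rate $N^{-2}\ln^{1/2} N$ (with the $\varepsilon_2^{1/2}$ prefactor) does not spoil the stated first-order factor, which the simple estimate above handles. The proof therefore fits in a few lines.
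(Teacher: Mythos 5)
Your proposal is correct and is exactly the paper's argument: the paper proves Theorem \ref{the:u-u^N} by combining Theorem \ref{eq:Interpolation-energy-norm} and Theorem \ref{the:u^I-u^N} via the triangle inequality. Your extra bookkeeping showing that $C\varepsilon_2^{1/2}N^{-2}\ln^{1/2}N$ is absorbed into $C(\varepsilon_2+\varepsilon_1^{1/2})^{1/2}N^{-1}$ is a valid and slightly more explicit version of what the paper leaves implicit.
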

\begin{proof}
From Theorem \ref{eq:Interpolation-energy-norm} and Theorem \ref{the:u^I-u^N} we can easily prove the result.
\end{proof}
\begin{remark}
From   Theorems \ref{the:u^I-u^N} and \ref{the:u-u^N},  we can see   Theorem  \ref{the:u^I-u^N} presents a supercloseness result when $\varepsilon_2+\varepsilon_1^{\frac{1}{2}}< CN^{-2}$.
\end{remark}
\section{Numerical experimentation}
In this section, we present numerical examples to verify our theoretical findings. 
We focus on numerical experiments on  supercloseness results. 
Here,  a test problem in \cite{Brda1Zari2:2016-singularly} is introduced, which is adequate for our purpose,
\begin{equation*}\label{eq:problem}
-\varepsilon_1u''+\varepsilon_2u'+u=\cos \pi x \quad x\in \Omega,\quad u(0)=u(1)=0,
\end{equation*}
with the exact solution
\begin{equation*}\label{eq:exact solution}
u(x)=a\cos\pi x+b\sin\pi x+Ae^{-\mu_0x}+Be^{-\mu_1(1-x)},
\end{equation*}
where 
\begin{align*}
&a=\frac{ \varepsilon_1\pi^2+1}{\varepsilon_2^2\pi^2+(\varepsilon_1\pi^2+1)^2},\quad b=\frac{ \varepsilon_2\pi}{\varepsilon_2^2\pi^2+(\varepsilon_1\pi^2+1)^2} ,\\
&A=-a\frac{1+e^{-\mu_1}}{1-e^{-\mu_0-\mu_1}},\quad B=a\frac{1+e^{-\mu_0}}{1-e^{-\mu_0-\mu_1}},\quad\mu_{0,1}=\frac{\mp\varepsilon_2+\sqrt{\varepsilon_2^2+4\varepsilon_1}}{2\varepsilon_1}.
\end{align*}

%
%

In order that \eqref{eq:break point} and \eqref{eq:N to solutions} hold true, considering $16\le N \le 2048$ in our numerical experiments and we take
\begin{align*}
&0<\varepsilon_1\le 10^{-7},\quad 0<\varepsilon_2\le 10^{-4},\\
&p=0.5,\quad \tau=2.
\end{align*}
We also consider $\varepsilon_1=1,10^{-2},10^{-4},10^{-6}$ or $\varepsilon_2=1$ to check the according numerical performances.
For a fixed $\varepsilon_1$, $\varepsilon_2$ and $N$, we consider the errors 
\begin{equation*} 
e^N=\Vert u-u^N \Vert_E,\quad e^N_{I}=\Vert u^I-u^N \Vert_E,
\end{equation*}  
and the corresponding rates of convergence
\begin{equation*}
p^N=\frac{ \ln e^N-\ln e^{2N} }{\ln 2},\quad p^N_I=\frac{ \ln e^N_{I}-\ln e^{2N}_{I} }{\ln 2}.
\end{equation*}

In Tables \ref{table:1}--\ref{table:6} we present the errors $e^N$, $e^N_{I}$ and the convergence orders $p^N$, $p^N_I$ for $\varepsilon_1=1,10^{-2},10^{-6},10^{-8},10^{-10}$ and $N=16,\ldots,2048$ in the case of $\varepsilon_2=1,10^{-4},10^{-8}$. 


The data show that no matter $\varepsilon_2$ is taken as a large value or a small value, there exists supercloseness results  as long as $\varepsilon_1$ and $\varepsilon_2$ satisfy proper conditions. 
And we can see in some cases, some orders of $\Vert u-u^N\Vert_E$ are  closer  ones of $\Vert u^I-u^N\Vert_E$ such as the last two columns in Tables \ref{table:3}--\ref{table:6} , so the supercloseness  is very weak.
These phenomena support  Theorem \ref{the:u^I-u^N}.



 
\begin{table}[h]
\caption{$\Vert u-u^N\Vert_E$ in the case of $\varepsilon_2=1$}
\footnotesize
\resizebox{90mm}{30mm}{
\begin{tabular*}{\textwidth}{@{\extracolsep{\fill}} c| cccccccccc cc}
\cline{1-13}{}
             \diagbox{$N$}{$\varepsilon_1$}    &$10^{0}$&$10^{0}$    &$10^{-2}$ & $10^{-2}$  & $10^{-4}$ & $10^{-4}$ & $10^{-6}$ & $10^{-6}$ & $10^{-8}$ & $10^{-8}$& $10^{-10}$& $10^{-10}$\\
\cline{1-13}
    &$e^N$&$p^N$&$e^N$&$p^N$&$e^N$&$p^N$&$e^N$&$p^N$&$e^N$&$p^N$&$e^N$&$p^N$\\ 
\cline{1-13}
             $16$       & 0.12E-1  & 1.00    & 0.41E-1  & 1.06  & 0.35E-1   & 0.96  & 0.35E-1   & 0.95    & 0.35E-1   & 0.95    & 0.35E-1   & 0.95\\
             $32$       & 0.62E-2  & 1.00    & 0.20E-1  & 1.01  & 0.18E-1   & 1.00  & 0.18E-1   & 1.00    & 0.18E-1   & 1.00    & 0.18E-1   & 1.00\\
             $64$       & 0.31E-2  & 1.00    & 0.98E-2  & 1.00  & 0.90E-2   & 1.00  & 0.90E-2   & 1.00    & 0.90E-2   & 1.00    & 0.90E-2   & 1.00\\
             $128$      & 0.16E-2  & 1.00    & 0.49E-2  & 1.00  & 0.45E-2   & 1.00  & 0.45E-2   & 1.00    & 0.45E-2   & 1.00    & 0.45E-2   & 1.00\\
             $256$      & 0.78E-3  & 1.00    & 0.25E-2  & 1.00  & 0.22E-2   & 1.00  & 0.22E-2   & 1.00    & 0.22E-2   & 1.00    & 0.22E-2   & 1.00\\
             $512$      & 0.39E-3  & 1.00    & 0.12E-2  & 1.00  & 0.11E-2   & 1.00  & 0.11E-2   & 1.00    & 0.11E-2   & 1.00    & 0.11E-2   & 1.00\\
             $1024$     & 0.19E-3  & 1.00    & 0.62E-3  & 1.00  & 0.56E-3   & 1.00  & 0.56E-3   & 1.00    & 0.56E-3   & 1.00    & 0.56E-3   & 1.00\\
             $2048$     & 0.97E-4  & 1.00    & 0.31E-3  & 1.00  & 0.28E-3   & 1.00  & 0.28E-3   & 1.00    & 0.28E-3   & 1.00    & 0.28E-3   & 1.00\\
             $4096$     & 0.49E-4  & ---     & 0.15E-3  & ---   & 0.14E-3   & ---   & 0.14E-3   & ---     & 0.14E-3   & ---    & 0.14E-3   & ---\\
\cline{1-13}
\end{tabular*}}
\label{table:1}
\end{table}
\begin{table}[h]
\caption{$\Vert u^I-u^N\Vert_E$ in the case of $\varepsilon_2=1$}
\footnotesize
\resizebox{90mm}{30mm}{
\begin{tabular*}{\textwidth}{@{\extracolsep{\fill}} c| cccccccccc cc}
\cline{1-13}{}
             \diagbox{$N$}{$\varepsilon_1$}    &$10^{0}$&$10^{0}$    &$10^{-2}$ & $10^{-2}$  & $10^{-4}$ & $10^{-4}$ & $10^{-6}$ & $10^{-6}$ & $10^{-8}$ & $10^{-8}$& $10^{-10}$& $10^{-10}$\\
\cline{1-13}
    &$e^N_I$&$p^N_I$&$e^N_I$&$p^N_I$&$e^N_I$&$p^N_I$&$e^N_I$&$p^N_I$&$e^N_I$&$p^N_I$&$e^N_I$&$p^N_I$\\ 
\cline{1-13}
             $16$       & 0.21E-3  & 2.00    & 0.14E-1  & 2.04  & 0.81E-2   & 1.84  & 0.79E-2   & 1.82    & 0.79E-2   & 1.82    & 0.79E-2   & 1.82\\
             $32$       & 0.53E-4  & 2.00    & 0.34E-2  & 1.88  & 0.23E-2   & 1.99  & 0.22E-2   & 1.98    & 0.22E-2   & 1.98    & 0.22E-2   & 1.98\\
             $64$       & 0.13E-4  & 2.00    & 0.91E-3  & 1.85  & 0.57E-3   & 2.00  & 0.57E-3   & 2.00    & 0.57E-3   & 2.00    & 0.57E-3   & 2.00\\
             $128$      & 0.33E-5  & 2.00    & 0.25E-3  & 1.87  & 0.14E-3   & 2.00  & 0.14E-3   & 2.00    & 0.14E-3   & 2.00    & 0.14E-3   & 2.00\\
             $256$      & 0.83E-6  & 2.00    & 0.69E-4  & 1.92  & 0.36E-4   & 2.00  & 0.35E-4   & 2.00    & 0.35E-4   & 2.00    & 0.35E-4   & 2.00\\
             $512$      & 0.21E-6  & 2.00    & 0.18E-4  & 1.97  & 0.89E-5   & 2.00  & 0.89E-5   & 2.00    & 0.89E-5   & 2.00    & 0.89E-5   & 2.00\\
             $1024$     & 0.52E-7  & 2.00    & 0.47E-5  & 1.99  & 0.22E-5   & 2.00  & 0.22E-5   & 2.00    & 0.22E-5   & 2.00    & 0.22E-5   & 2.00\\
             $2048$     & 0.13E-7  & 2.00    & 0.12E-5  & 2.00  & 0.56E-6   & 2.00  & 0.55E-6   & 2.00    & 0.55E-6   & 2.00    & 0.55E-6   & 2.00\\
             $4096$     & 0.33E-8  & ---     & 0.30E-6  & ---   & 0.14E-6   & ---   & 0.14E-6   & ---     & 0.14E-6   & ---     & 0.14E-6   & ---\\
\cline{1-13}
\end{tabular*}}
\label{table:2}
\end{table}
\begin{table}[h]
\caption{$\Vert u-u^N\Vert_E$ in the case of $\varepsilon_2=10^{-4}$}
\footnotesize
\resizebox{90mm}{30mm}{
\begin{tabular*}{\textwidth}{@{\extracolsep{\fill}} c| cccccccccc cc}
\cline{1-13}{}
             \diagbox{$N$}{$\varepsilon_1$}    &$10^{0}$&$10^{0}$    &$10^{-2}$ & $10^{-2}$  & $10^{-4}$ & $10^{-4}$ & $10^{-6}$ & $10^{-6}$ & $10^{-8}$ & $10^{-8}$& $10^{-10}$& $10^{-10}$\\
\cline{1-13}
    &$e^N$&$p^N$&$e^N$&$p^N$&$e^N$&$p^N$&$e^N$&$p^N$&$e^N$&$p^N$&$e^N$&$p^N$\\
\cline{1-13}
             $16$       & 0.25E-1  & 1.00    & 0.62E-1  & 1.00  & 0.43E-1   & 1.03  & 0.14E-1   & 1.13    & 0.89E-2   & 1.66    & 0.13E-1   & 1.95\\
             $32$       & 0.13E-1  & 1.00    & 0.31E-1  & 1.00  & 0.21E-1   & 1.00  & 0.66E-2   & 1.03    & 0.28E-2   & 1.30    & 0.33E-2   & 1.76\\
             $64$       & 0.63E-2  & 1.00    & 0.15E-1  & 1.00  & 0.10E-1   & 1.01  & 0.32E-2   & 1.01    & 0.11E-2   & 1.08    & 0.98E-3   & 1.36\\
             $128$      & 0.31E-3  & 1.00    & 0.77E-2  & 1.00  & 0.51E-2   & 1.01  & 0.16E-2   & 1.00    & 0.54E-3   & 1.02    & 0.38E-3   & 1.07\\
             $256$      & 0.16E-2  & 1.00    & 0.39E-2  & 1.00  & 0.26E-2   & 1.01  & 0.80E-3   & 1.00    & 0.27E-3   & 1.00    & 0.18E-3   & 1.01\\
             $512$      & 0.78E-3  & 1.00    & 0.19E-3  & 1.00  & 0.13E-2   & 1.00  & 0.40E-3   & 1.00    & 0.13E-3   & 1.00    & 0.90E-4   & 1.00\\
             $1024$     & 0.39E-3  & 1.00    & 0.96E-3  & 1.00  & 0.63E-3   & 1.00  & 0.20E-3   & 1.00    & 0.67E-4   & 1.00    & 0.45E-4   & 1.00\\
             $2048$     & 0.20E-3  & 1.00    & 0.48E-3  & 1.00  & 0.32E-3   & 1.00  & 0.10E-3   & 1.00    & 0.33E-4   & 1.00    & 0.23E-4   & 1.00\\
             $4096$     & 0.98E-4  & ---     & 0.24E-3  & ---   & 0.16E-3   & ---   & 0.50E-4   & ---     & 0.17E-4   & ---     & 0.11E-4   & ---\\
\cline{1-13}
\end{tabular*}}
\label{table:3}
\end{table}
\begin{table}[h]
\caption{$\Vert u^I-u^N\Vert_E$ in the case of $\varepsilon_2=10^{-4}$}
\footnotesize
\resizebox{90mm}{30mm}{
\begin{tabular*}{\textwidth}{@{\extracolsep{\fill}} c| cccccccccc cc}
\cline{1-13}{}
             \diagbox{$N$}{$\varepsilon_1$}    &$10^{0}$&$10^{0}$    &$10^{-2}$ & $10^{-2}$  & $10^{-4}$ & $10^{-4}$ & $10^{-6}$ & $10^{-6}$ & $10^{-8}$ & $10^{-8}$& $10^{-10}$& $10^{-10}$\\
\cline{1-13}
    &$e^N_I$&$p^N_I$&$e^N_I$&$p^N_I$&$e^N_I$&$p^N_I$&$e^N_I$&$p^N_I$&$e^N_I$&$p^N_I$&$e^N_I$&$p^N_I$\\ 
\cline{1-13}
             $16$       & 0.37E-3  & 2.00    & 0.11E-1  & 1.99  & 0.24E-1   & 1.28  & 0.12E-1   & 2.01    & 0.14E-1   & 2.02    & 0.17E-1   & 2.03\\
             $32$       & 0.94E-4  & 2.00    & 0.28E-2  & 2.00  & 0.98E-2   & 1.17  & 0.31E-2   & 2.00    & 0.35E-2   & 2.03    & 0.42E-2   & 2.08\\
             $64$       & 0.23E-4  & 2.00    & 0.71E-3  & 2.00  & 0.44E-2   & 1.30  & 0.77E-3   & 1.92    & 0.86E-3   & 2.02    & 0.10E-2   & 2.14\\
             $128$      & 0.59E-5  & 2.00    & 0.18E-3  & 2.00  & 0.18E-2   & 1.54  & 0.20E-3   & 1.70    & 0.21E-3   & 2.01    & 0.23E-3   & 2.09\\
             $256$      & 0.15E-5  & 2.00    & 0.45E-4  & 2.00  & 0.61E-3   & 1.77  & 0.63E-4   & 1.40    & 0.52E-4   & 2.00    & 0.54E-4   & 2.02\\
             $512$      & 0.37E-6  & 2.00    & 0.11E-4  & 2.00  & 0.18E-3   & 1.91  & 0.24E-4   & 1.31    & 0.13E-4   & 1.99    & 0.13E-4   & 2.00\\
             $1024$     & 0.92E-7  & 2.00    & 0.28E-5  & 2.00  & 0.47E-4   & 1.98  & 0.97E-5   & 1.46    & 0.33E-5   & 1.95    & 0.33E-5   & 1.98\\
             $2048$     & 0.23E-7  & 2.00    & 0.70E-6  & 2.00  & 0.12E-4   & 1.99  & 0.35E-5   & 1.68    & 0.85E-6   & 1.84    & 0.84E-6   & 1.94\\
             $4096$     & 0.57E-8  & ---     & 0.17E-6  & ---   & 0.30E-5   & ---   & 0.11E-5   & ---     & 0.24E-6   & ---     & 0.22E-6   & ---\\
\cline{1-13}
\end{tabular*}}
\label{table:4}
\end{table}
\begin{table}[h]
\caption{$\Vert u-u^N\Vert_E$ in the case of $\varepsilon_2=10^{-8}$}
\footnotesize
\resizebox{90mm}{30mm}{
\begin{tabular*}{\textwidth}{@{\extracolsep{\fill}} c| cccccccccc cc}
\cline{1-13}{}
             \diagbox{$N$}{$\varepsilon_1$}    &$10^{0}$&$10^{0}$    &$10^{-2}$ & $10^{-2}$  & $10^{-4}$ & $10^{-4}$ & $10^{-6}$ & $10^{-6}$ & $10^{-8}$ & $10^{-8}$& $10^{-10}$& $10^{-10}$\\
\cline{1-13}
    &$e^N$&$p^N$&$e^N$&$p^N$&$e^N$&$p^N$&$e^N$&$p^N$&$e^N$&$p^N$&$e^N$&$p^N$\\
\cline{1-13}
             $16$       & 0.25E-1  & 1.00    & 0.62E-1  & 1.00  & 0.43E-1   & 1.03  & 0.14E-1   & 1.13    & 0.74E-2   & 1.55    & 0.64E-2   & 1.94\\
             $32$       & 0.13E-1  & 1.00    & 0.31E-1  & 1.00  & 0.21E-1   & 1.01  & 0.66E-2   & 1.03    & 0.25E-2   & 1.23    & 0.17E-2   & 1.74\\
             $64$       & 0.63E-2  & 1.00    & 0.15E-1  & 1.00  & 0.10E-1   & 1.01  & 0.32E-2   & 1.01    & 0.11E-2   & 1.07    & 0.50E-3   & 1.42\\
             $128$      & 0.31E-2  & 1.00    & 0.77E-2  & 1.00  & 0.51E-2   & 1.01  & 0.16E-2   & 1.00    & 0.51E-3   & 1.02    & 0.19E-3   & 1.16\\
             $256$      & 0.16E-2  & 1.00    & 0.39E-2  & 1.00  & 0.26E-2   & 1.01  & 0.80E-3   & 1.00    & 0.25E-3   & 1.01    & 0.83E-4   & 1.05\\
             $512$      & 0.78E-3  & 1.00    & 0.19E-3  & 1.00  & 0.13E-2   & 1.00  & 0.40E-3   & 1.00    & 0.13E-3   & 1.00    & 0.40E-4   & 1.01\\
             $1024$     & 0.39E-3  & 1.00    & 0.96E-3  & 1.00  & 0.63E-3   & 1.00  & 0.20E-3   & 1.00    & 0.63E-4   & 1.00    & 0.20E-4   & 1.00\\
             $2048$     & 0.20E-3  & 1.00    & 0.48E-3  & 1.00  & 0.32E-3   & 1.00  & 0.10E-3   & 1.00    & 0.32E-4   & 1.00    & 0.10E-4   & 1.00\\
             $4096$     & 0.98E-4  & ---     & 0.24E-3  & ---   & 0.16E-3   & ---   & 0.50E-4   & ---     & 0.16E-4   & ---     & 0.50E-5   & ---\\
\cline{1-13}
\end{tabular*}}
\label{table:5}
\end{table}
\begin{table}[h]
\caption{$\Vert u^I-u^N\Vert_E$ in the case of $\varepsilon_2=10^{-8}$}
\footnotesize
\resizebox{90mm}{30mm}{
\begin{tabular*}{\textwidth}{@{\extracolsep{\fill}} c| cccccccccc cc}
\cline{1-13}{}
             \diagbox{$N$}{$\varepsilon_1$}    &$10^{0}$&$10^{0}$    &$10^{-2}$ & $10^{-2}$  & $10^{-4}$ & $10^{-4}$ & $10^{-6}$ & $10^{-6}$ & $10^{-8}$ & $10^{-8}$& $10^{-10}$& $10^{-10}$\\
\cline{1-13}
    &$e^N_I$&$p^N_I$&$e^N_I$&$p^N_I$&$e^N_I$&$p^N_I$&$e^N_I$&$p^N_I$&$e^N_I$&$p^N_I$&$e^N_I$&$p^N_I$\\ 
\cline{1-13}
             $16$       & 0.37E-3  & 1.99    & 0.11E-1  & 1.99  & 0.24E-1   & 1.28  & 0.12E-1   & 2.01    & 0.13E-1   & 2.00    & 0.14E-1   & 2.00\\
             $32$       & 0.94E-4  & 2.00    & 0.28E-2  & 2.00  & 0.98E-2   & 1.17  & 0.31E-2   & 2.00    & 0.34E-2   & 2.00    & 0.34E-2   & 2.00\\
             $64$       & 0.23E-4  & 2.00    & 0.71E-3  & 2.00  & 0.44E-2   & 1.30  & 0.77E-3   & 1.92    & 0.84E-3   & 1.98    & 0.85E-3   & 2.00\\
             $128$      & 0.59E-5  & 2.00    & 0.18E-3  & 2.00  & 0.18E-2   & 1.54  & 0.20E-3   & 1.70    & 0.21E-3   & 2.01    & 0.21E-3   & 1.99\\
             $256$      & 0.15E-5  & 2.00    & 0.45E-4  & 2.00  & 0.61E-3   & 1.77  & 0.63E-4   & 1.40    & 0.53E-4   & 2.01    & 0.53E-4   & 1.98\\
             $512$      & 0.37E-6  & 2.00    & 0.11E-4  & 2.00  & 0.18E-3   & 1.91  & 0.24E-4   & 1.31    & 0.13E-4   & 2.00    & 0.14E-4   & 2.02\\
             $1024$     & 0.92E-7  & 2.00    & 0.28E-5  & 2.00  & 0.47E-4   & 1.98  & 0.96E-5   & 1.45    & 0.33E-5   & 1.98    & 0.33E-5   & 2.01\\
             $2048$     & 0.23E-7  & 2.00    & 0.70E-6  & 2.00  & 0.12E-4   & 1.99  & 0.35E-5   & 1.68    & 0.83E-6   & 1.92    & 0.83E-6   & 2.00\\
             $4096$     & 0.57E-8  & ---     & 0.17E-6  & ---   & 0.30E-5   & ---   & 0.11E-5   & ---     & 0.22E-6   & ---     & 0.21E-6   & ---\\
\cline{1-13}
\end{tabular*}}
\label{table:6}
\end{table}


\begin{thebibliography}{10}

\bibitem{Brda1Zari2:2016-singularly}
M.~Brdar and H.~Zarin.
\newblock A singularly perturbed problem with two parameters on a
  {B}akhvalov-type mesh.
\newblock {\em J. Comput. Appl. Math.}, 292:307--319, 2016.

\bibitem{Bren1Scot2:2008-mathematical}
S.~C. Brenner and L.~R. Scott.
\newblock {\em The mathematical theory of finite element methods}, volume~15 of
  {\em Texts in Applied Mathematics}.
\newblock Springer, New York, third edition, 2008.

\bibitem{Fran1Lins2:2008-Superconvergence}
S.~Franz and T.~Lin\ss.
\newblock Superconvergence analysis of the {G}alerkin {FEM} for a singularly
  perturbed convection-diffusion problem with characteristic layers.
\newblock {\em Numer. Methods Partial Differential Equations}, 24(1):144--164,
  2008.

\bibitem{Li:2001-Convergence}
J.~Li.
\newblock Convergence and superconvergence analysis of finite element methods
  on highly nonuniform anisotropic meshes for singularly perturbed
  reaction-diffusion problems.
\newblock {\em Appl. Numer. Math.}, 36(2-3):129--154, 2001.

\bibitem{Li1Wheeler2:2000-Uniform}
J.~Li and M.~F. Wheeler.
\newblock Uniform convergence and superconvergence of mixed finite element
  methods on anisotropically refined grids.
\newblock {\em SIAM J. Numer. Anal.}, 38(3):770--798, 2000.

\bibitem{Linss:2001-necessity}
T.~Lin\ss.
\newblock The necessity of {S}hishkin decompositions.
\newblock {\em Appl. Math. Lett.}, 14(7):891--896, 2001.

\bibitem{Linb:2010-Layer}
T.~Lin\ss.
\newblock {\em Layer-adapted meshes for reaction-convection-diffusion
  problems}, volume 1985 of {\em Lecture Notes in Mathematics}.
\newblock Springer-Verlag, Berlin, 2010.

\bibitem{OMalley:1967-Two-parameter}
R.~E. O'Malley, Jr.
\newblock Two-parameter singular perturbation problems for second-order
  equations.
\newblock {\em J. Math. Mech.}, 16:1143--1164, 1967.

\bibitem{Roos-2006-Error}
H.-G. Roos.
\newblock Error estimates for linear finite elements on {B}akhvalov-type
  meshes.
\newblock {\em Appl. Math.}, 51(1):63--72, 2006.

\bibitem{Roos1Lins2:2001-Dradient}
H.-G. Roos and T.~Lin\ss.
\newblock Gradient recovery for singularly perturbed boundary value problems.
  {II}. {T}wo-dimensional convection-diffusion.
\newblock {\em Math. Models Methods Appl. Sci.}, 11(7):1169--1179, 2001.

\bibitem{Styn1Tobi2:2003-SDFEM}
M.~Stynes and L.~Tobiska.
\newblock The {SDFEM} for a convection-diffusion problem with a boundary layer:
  optimal error analysis and enhancement of accuracy.
\newblock {\em SIAM J. Numer. Anal.}, 41(5):1620--1642, 2003.

\bibitem{Teof1Brdar2Fran3Zari4:2018-SDFEM}
Lj. Teofanov, M.~Brdar, S.~Franz, and H.~Zarin.
\newblock S{DFEM} for an elliptic singularly perturbed problem with two
  parameters.
\newblock {\em Calcolo}, 55(4):Paper No. 50, 20, 2018.

\bibitem{Tefa1Roos2:2007-elliptic}
Lj. Teofanov and H.-G. Roos.
\newblock An elliptic singularly perturbed problem with two parameters. {I}.
  {S}olution decomposition.
\newblock {\em J. Comput. Appl. Math.}, 206(2):1082--1097, 2007.

\bibitem{Teof1Zari2:2009-Superconvergence}
Lj.~Teofanov and H.~Zarin.
\newblock Superconvergence analysis of a finite element method for a
  two-parameter singularly perturbed problem.
\newblock {\em BIT}, 49(4):743--765, 2009.

\bibitem{Zhan1Liu2:2020-Optimal}
J.~Zhang and X.~Liu.
\newblock Optimal order of uniform convergence for finite element method on
  {B}akhvalov-type meshes.
\newblock {\em J. Sci. Comput.}, 85(1):2, 2020.

\bibitem{Zhan1Liu2:2021-Supercloseness}
J.~Zhang and X.~Liu.
\newblock Supercloseness of linear finite element method on {B}akhvalov-type
  meshes for singularly perturbed convection-diffusion equation in 1{D}.
\newblock {\em Appl. Math. Lett.}, 111:106624, 7, 2021.

\bibitem{Zhang:2003-Finite}
Z.~Zhang.
\newblock Finite element superconvergence on {S}hishkin mesh for 2-{D}
  convection-diffusion problems.
\newblock {\em Math. Comp.}, 72(243):1147--1177, 2003.

\end{thebibliography}
\end{document}